\theoremstyle{plain}
\newtheorem{theorem}{Theorem}
\newtheorem{assertion}[theorem]{Assertion}
\newtheorem{proposition}[theorem]{Proposition}
\theoremstyle{definition}
\newtheorem{definition}[theorem]{Definition}
\theoremstyle{remark}
\newtheorem{remark}[theorem]{Remark}
\newtheorem{example}[theorem]{Example}
\numberwithin{equation}{section}
\numberwithin{theorem}{section}
\renewcommand{\mathfrak}[1]{{\textbf{\upshape #1}}}
\renewcommand{\mathbf}{\bm}
\renewcommand{\emph}[1]{\textrm{{\upshape #1}}}
\renewcommand{\mathit}[1]{\mathscr #1}
\renewcommand{\mathtt}[1]{\scalebox{1}{\bfseries \texttt{\upshape #1}}}
\numberwithin{equation}{section}
\numberwithin{theorem}{section}
\def\oldbibitem{} \let\oldbibitem=\bibitem
\def\bibitem{\stepcounter{citnum}\oldbibitem}
\renewcommand*{\backref}[1]{}
\renewcommand*{\backrefalt}[4]{[{\tiny%
    \ifcase #1 \textsl{Not cited}%
          \or \textsl{Cited on page}~\textcolor{BrickRed}{#2}%
          \else \textsl{Cited on pages}~\textcolor{BrickRed}{#2}%
    \fi%
    }]}
\author{\small\scshape S\lowercase{teven} D\lowercase{uplij}}
\address{
Center for Information Technology (WWU IT),
Universit\"at M\"unster,
R\"ontgenstrasse 7-13\\
D-48149 M\"unster,
Deutschland}
\email{\small \sf douplii@uni-muenster.de;
sduplij@gmail.com;
https://ivv5hpp.uni-muenster.de/u/douplii}
\title{\large\bfseries\scshape
P\lowercase{olyadic analog of} G\lowercase{rothendieck group
}}
\date{\textit{of start} February 19, 2022. \textit{Date}:
\textit{of completion} May 8, 2022. \textit{Corrections}: June 14, 2022. 
\newline
\mbox{}\hskip 1.16em
\textit{Total}:
\total{citnum}
references
}
\renewcommand{\refname}{\textsc{References}}
\let\origsection\section
\renewcommand{\section}[1]{\sectionmark{#1}\origsection{#1}}
\let\origsubsection\subsection
\renewcommand{\subsection}[1]{\subsectionmark{#1}\origsubsection{#1}}
\renewenvironment{thebibliography}[1]{%
  \@xp\origsection\@xp*\@xp{\refname}%
  \normalfont\footnotesize\labelsep .9em\relax
  \renewcommand\theenumiv{\arabic{enumiv}}\let\p@enumiv\@empty
  \vspace*{-5pt}
  \list{\@biblabel{\theenumiv}}{\settowidth\labelwidth{\@biblabel{#1}}%
    \leftmargin\labelwidth \advance\leftmargin\labelsep
    \usecounter{enumiv}}%
  \sloppy \clubpenalty\@M \widowpenalty\clubpenalty
  \sfcode`\.=\@m
}{%
  \def\@noitemerr{\@latex@warning{Empty `thebibliography' environment}}%
  \endlist
}
\subjclass[2010]{16E20, 16T25, 17A42, 18F30, 19A99, 20B30, 20F36, 20M17, 20N15}
\keywords{K-theory, completion group, class group, direct product, polyadic semigroup, arity, polyadic group}
\begin{document}
\mbox{}

\mbox{}

\begin{abstract}


\noindent We generalize the Grothendieck construction of the completion group
for a monoid (being the starting point of the algebraic $K$-theory) to the
polyadic case, when an initial semigroup is $m$-ary and the corresponding
final class group $K_{0}$ can be $n$-ary. As opposed to the binary case: 1)
there can be different polyadic direct products which can be built from one
polyadic semigroup; 2) the final arity $n$ of the class groups can be
different from the arity $m$ of initial semigroup; 3) commutative initial
$m$-ary semigroups can lead to noncommutative class $n$-ary groups; 4) the
identity is not necessary for initial $m$-ary semigroup to obtain the class
$n$-ary group, which in its turn can contain no identity at all. The presented
numerical examples show that the properties of the polyadic completion groups
are considerably nontrivial and have more complicated structure than in the
binary case.

\end{abstract}
\maketitle

\thispagestyle{empty}

\mbox{}
\tableofcontents
\newpage

\pagestyle{fancy}

\addtolength{\footskip}{15pt}

\renewcommand{\sectionmark}[1]{%
\markboth{
{ \scshape #1}}{}}

\renewcommand{\subsectionmark}[1]{%
\markright{
\mbox{\;}\\[5pt]
\textmd{#1}}{}}

\fancyhead{}
\fancyhead[EL,OR]{\leftmark}
\fancyhead[ER,OL]{\rightmark}
\fancyfoot[C]{\scshape -- \textcolor{BrickRed}{\thepage} --}
\fancyfoot[R]{{\small {\it \today } \ at \currenttime}}

\renewcommand\headrulewidth{0.5pt}
\fancypagestyle {plain1}{ %
\fancyhf{}
\renewcommand {\headrulewidth }{0pt}
\renewcommand {\footrulewidth }{0pt}
}

\fancypagestyle{plain}{ %
\fancyhf{}
\fancyhead[C]{\scshape S\lowercase{teven} D\lowercase{uplij} \hskip 0.7cm \MakeUppercase{Polyadic Hopf algebras and quantum groups}}
\fancyfoot[C]{\scshape - \thepage  -}
\renewcommand {\headrulewidth }{0pt}
\renewcommand {\footrulewidth }{0pt}
}

\fancypagestyle{fancyref}{ %
\fancyhf{} 
\fancyhead[C]{\scshape R\lowercase{eferences} }
\fancyfoot[C]{\scshape -- \textcolor{BrickRed}{\thepage} --}
\renewcommand {\headrulewidth }{0.5pt}
\renewcommand {\footrulewidth }{0pt}
}

\fancypagestyle{emptyf}{
\fancyhead{}
\fancyfoot[C]{\scshape -- \textcolor{BrickRed}{\thepage} --}
\renewcommand{\headrulewidth}{0pt}
}
\mbox{}
\vskip 3.5cm
\thispagestyle{emptyf}

\section{\textsc{Introduction}}

The Grothendieck construction of the completion group for a monoid is a ground
and starting point of the algebraic $K$-theory (see, e.g.
\cite{karoubi,rosenberg}). Here we generalize this construction to the
polyadic case, when an initial monoid (or semigroup) and a final class group
are $m$-ary and $n$-ary, correspondingly, and we denote such a class polyadic
group $\mathsf{K}_{0}^{\left(  m,n\right)  }$. As concrete examples, we
provide several computations for $\mathsf{K}_{0}^{\left(  m,n\right)  }$,
including the case, when the arities do not coincide $m\neq n$.

\section{\textsc{Preliminaries}}

We introduce here briefly the usual notation, for details see \cite{dup2018a}.
For a non-empty (underlying) set $G$ the $n$\textit{-tuple} (or
\textit{polyad} \cite{pos}) of elements is denoted by $\left(  g_{1}%
,\ldots,g_{n}\right)  $, $g_{i}\in G$, $i=1,\ldots,n$, and the Cartesian
product is denoted by $G^{\times n}\equiv\overset{n}{\overbrace{G\times
\ldots\times G}}$ and consists of all such $n$-tuples. For all elements equal
to $g\in G$, we denote $n$-tuple (polyad) by a power $\left(  g^{n}\right)  $.
To avoid unneeded indices we denote with one bold letter $\left(
\mathbf{g}\right)  $ a polyad for which the number of elements in the
$n$-tuple is clear from the context, and sometimes we will write $\left(
\mathbf{g}^{\left(  n\right)  }\right)  $. On the Cartesian product $G^{\times
n}$ we define a polyadic (or $n$-ary) operation $\mu^{\left(  n\right)
}:G^{\times n}\rightarrow G$ such that $\mu^{\left(  n\right)  }\left[
\mathbf{g}\right]  \mapsto h$, where $h\in G$. The operations with $n=1,2,3$
are called \textit{unary, binary and ternary}.

Recall the definitions of some algebraic structures and their special elements
(in the notation of \cite{dup2018a}). A (one-set) \textit{polyadic algebraic
structure} $\mathcal{G}$ is a set $G$ closed with respect to polyadic
operations. In the case of one $n$-ary operation $\mu^{\left(  n\right)
}:G^{\times n}\rightarrow G$, it is called \textit{polyadic multiplication}
(or $n$\textit{-ary multiplication}). A one-set $n$\textit{-ary algebraic
structure} $\mathcal{M}^{\left(  n\right)  }=\left\langle G\mid\mu^{\left(
n\right)  }\right\rangle $ or \textit{polyadic magma} ($n$-\textit{ary magma)
}is a set $G$ closed with respect to one $n$-ary operation $\mu^{\left(
n\right)  }$ and without any other additional structure. In the binary case
$\mathcal{M}^{\left(  2\right)  }$ was also called a groupoid by Hausmann and
Ore \cite{hau/ore} (and \cite{cli/pre1}). Since the term \textquotedblleft
groupoid\textquotedblright\ was widely used in category theory for a different
construction, the so-called Brandt groupoid \cite{bra1,bruck}, Bourbaki
\cite{bourbaki98} later introduced the term \textquotedblleft
magma\textquotedblright.

Denote the number of iterating multiplications by $\ell_{\mu}$, and call the
resulting composition an \textit{iterated product} $\left(  \mu^{\left(
n\right)  }\right)  ^{\circ\ell_{\mu}}$, such that%
\begin{equation}
\mu^{\prime\left(  n^{\prime}\right)  }=\left(  \mu^{\left(  n\right)
}\right)  ^{\circ\ell_{\mu}}\overset{def}{=}\overset{\ell_{\mu}}%
{\overbrace{\mu^{\left(  n\right)  }\circ\left(  \mu^{\left(  n\right)  }%
\circ\ldots\left(  \mu^{\left(  n\right)  }\times\operatorname*{id}%
\nolimits^{\times\left(  n-1\right)  }\right)  \ldots\times\operatorname*{id}%
\nolimits^{\times\left(  n-1\right)  }\right)  }}, \label{ck-mn}%
\end{equation}
where the arities are connected by%
\begin{equation}
n^{\prime}=n_{iter}=\ell_{\mu}\left(  n-1\right)  +1, \label{ck-n}%
\end{equation}
which gives the length of a iterated polyad $\left(  \mathbf{g}\right)  $ in
our notation $\left(  \mu^{\left(  n\right)  }\right)  ^{\circ\ell_{\mu}%
}\left[  \mathbf{g}\right]  $.

A \textit{polyadic zero} of a polyadic algebraic structure $\mathcal{G}%
^{\left(  n\right)  }\left\langle G\mid\mu^{\left(  n\right)  }\right\rangle $
is a distinguished element $z\in G$ (and the corresponding $0$-ary operation
$\mu_{z}^{\left(  0\right)  }$) such that for any $\left(  n-1\right)  $-tuple
(polyad) $\mathbf{g}^{\left(  n-1\right)  }\mathbf{\in}G^{\times\left(
n-1\right)  }$ we have%
\begin{equation}
\mu^{\left(  n\right)  }\left[  \mathbf{g}^{\left(  n-1\right)  },z\right]
=z, \label{ck-z}%
\end{equation}
where $z$ can be on any place in the l.h.s. of (\ref{ck-z}). If its place is
not fixed it can be a single zero. As in the binary case, an analog of
positive powers of an element \cite{pos} should coincide with the number of
multiplications $\ell_{\mu}$ in the iteration (\ref{ck-mn}).

A (positive) \textit{polyadic power} of an element is%
\begin{equation}
g^{\left\langle \ell_{\mu}\right\rangle }=\left(  \mu^{\left(  n\right)
}\right)  ^{\circ\ell_{\mu}}\left[  g^{\ell_{\mu}\left(  n-1\right)
+1}\right]  . \label{ck-pp}%
\end{equation}
We define associativity as the invariance of the composition of two $n$-ary
multiplications. An element of a polyadic algebraic structure $g$ is called
$\ell_{\mu}$-\textit{nilpotent} (or simply \textit{nilpotent} for $\ell_{\mu
}=1$), if there exist $\ell_{\mu}$ such that%
\begin{equation}
g^{\left\langle \ell_{\mu}\right\rangle }=z. \label{ck-mz}%
\end{equation}
A \textit{polyadic (}$n$-\textit{ary) identity} (or neutral element) of a
polyadic algebraic structure is a distinguished element $e$ (and the
corresponding $0$-ary operation $\mu_{e}^{\left(  0\right)  }$) such that for
any element $g\in G$ we have%
\begin{equation}
\mu^{\left(  n\right)  }\left[  g,e^{n-1}\right]  =g, \label{ck-e}%
\end{equation}
where $g$ can be on any place in the l.h.s. of (\ref{ck-e}).

In polyadic algebraic structures, there exist \textit{neutral polyads}
$\mathbf{n}\in G^{\times\left(  n-1\right)  }$ satisfying%
\begin{equation}
\mu^{\left(  n\right)  }\left[  g,\mathbf{n}\right]  =g, \label{ck-mng}%
\end{equation}
where $g$ can be on any of $n$ places in the l.h.s. of (\ref{ck-mng}).
Obviously, the sequence of polyadic identities $e^{n-1}$ is a neutral polyad
(\ref{ck-e}).

A one-set polyadic algebraic structure $\left\langle G\mid\mu^{\left(
n\right)  }\right\rangle $ is called \textit{totally associative}, if%
\begin{equation}
\left(  \mu^{\left(  n\right)  }\right)  ^{\circ2}\left[  \mathbf{g}%
,\mathbf{h},\mathbf{u}\right]  =\mu^{\left(  n\right)  }\left[  \mathbf{g}%
,\mu^{\left(  n\right)  }\left[  \mathbf{h}\right]  ,\mathbf{u}\right]
=invariant, \label{ck-ghu}%
\end{equation}
with respect to placement of the internal multiplication $\mu^{\left(
n\right)  }\left[  \mathbf{h}\right]  $ in r.h.s. on any of $n$ places, with a
fixed order of elements in the any fixed polyad of $\left(  2n-1\right)  $
elements $\mathbf{t}^{\left(  2n-1\right)  }=\left(  \mathbf{g},\mathbf{h}%
,\mathbf{u}\right)  \in G^{\times\left(  2n-1\right)  }$.

A \textit{polyadic semigroup} $\mathcal{S}^{\left(  n\right)  }$ is a one-set
$S$ one-operation $\mu^{\left(  n\right)  }$ algebraic structure in which the
$n$-ary multiplication is associative, $\mathcal{S}^{\left(  n\right)
}=\left\langle S\mid\mu^{\left(  n\right)  }\mid\text{associativity
(\ref{ck-ghu})}\right\rangle $. A polyadic algebraic structure $\mathcal{G}%
^{\left(  n\right)  }=\left\langle G\mid\mu^{\left(  n\right)  }\right\rangle
$ is $\sigma$-\textit{commutative}, if $\mu^{\left(  n\right)  }=\mu^{\left(
n\right)  }\circ\sigma$, or%
\begin{equation}
\mu^{\left(  n\right)  }\left[  \mathbf{g}\right]  =\mu^{\left(  n\right)
}\left[  \sigma\circ\mathbf{g}\right]  ,\ \ \ \mathbf{g}\in G^{\times n},
\label{ck-ms}%
\end{equation}
where $\sigma\circ\mathbf{g}=\left(  g_{\sigma\left(  1\right)  }%
,\ldots,g_{\sigma\left(  n\right)  }\right)  $ is a permutated polyad and
$\sigma$ is a fixed element of $S_{n}$, the permutation group on $n$ elements.
If (\ref{ck-ms}) holds for all $\sigma\in S_{n}$, then a polyadic algebraic
structure is \textit{commutative}. A special type of the $\sigma
$-commutativity%
\begin{equation}
\mu^{\left(  n\right)  }\left[  g,\mathbf{t}^{\left(  n-2\right)  },h\right]
=\mu^{\left(  n\right)  }\left[  h,\mathbf{t}^{\left(  n-2\right)  },g\right]
, \label{ck-mth}%
\end{equation}
where $\mathbf{t}^{\left(  n-2\right)  }\in G^{\times\left(  n-2\right)  }$ is
any fixed $\left(  n-2\right)  $-polyad, is called \textit{semicommutativity}.
If an $n$-ary semigroup $\mathcal{S}^{\left(  n\right)  }$ is iterated from a
commutative binary semigroup with identity, then $\mathcal{S}^{\left(
n\right)  }$ is semicommutative. A polyadic algebraic structure is called
(uniquely) $i$-\textit{solvable}, if for all polyads $\mathbf{t}$,
$\mathbf{u}$ and element $h$, one can (uniquely) resolve the equation (with
respect to $h$) for the fundamental operation%
\begin{equation}
\mu^{\left(  n\right)  }\left[  \mathbf{u},h,\mathbf{t}\right]  =g
\label{ck-mug}%
\end{equation}
where $h$ can be on any place, and $\mathbf{u},\mathbf{t}$ are polyads of the
needed length.

A polyadic algebraic structure which is uniquely $i$-solvable for all places
$i=1,\ldots,n$ is called a $n$-\textit{ary }(or \textit{polyadic})\textit{
quasigroup }$\mathcal{Q}^{\left(  n\right)  }=\left\langle Q\mid\mu^{\left(
n\right)  }\mid\text{solvability}\right\rangle $. An associative polyadic
quasigroup is called a $n$-\textit{ary} (or \textit{polyadic})\textit{ group}.
In an $n$-ary group $\mathcal{G}^{\left(  n\right)  }=\left\langle G\mid
\mu^{\left(  n\right)  }\right\rangle $ the only solution of (\ref{ck-mug}) is
called a \textit{querelement} of $g$ and denoted by $\bar{g}$ \cite{dor3},
such that%
\begin{equation}
\mu^{\left(  n\right)  }\left[  \mathbf{h},\bar{g}\right]  =g,\ \ \ g,\bar
{g}\in G, \label{ck-mgg}%
\end{equation}
where $\bar{g}$ can be on any place. Any idempotent $g$ coincides with its
querelement $\bar{g}=g$. The unique solvability relation (\ref{ck-mgg}) in a
$n$-ary group can be treated as a definition of the unary (multiplicative)
\textit{queroperation}%
\begin{equation}
\bar{\mu}^{\left(  1\right)  }\left[  g\right]  =\bar{g}. \label{ck-m1g}%
\end{equation}
We observe from (\ref{ck-mgg}) and (\ref{ck-mng}) that the polyad%
\begin{equation}
\mathbf{n}_{g}=\left(  g^{n-2}\bar{g}\right)  \label{ck-ng}%
\end{equation}
is neutral for any element of a polyadic group, where $\bar{g}$ can be on any
place. If this $i$-th place is important, then we write $\mathbf{n}_{g;i}$. In
a polyadic group the \textit{D\"{o}rnte relations} \cite{dor3}%
\begin{equation}
\mu^{\left(  n\right)  }\left[  g,\mathbf{n}_{h;i}\right]  =\mu^{\left(
n\right)  }\left[  \mathbf{n}_{h;j},g\right]  =g \label{ck-mgnn}%
\end{equation}
hold true for any allowable $i,j$. In the case of a binary group the relations
(\ref{ck-mgnn}) become $g\cdot h\cdot h^{-1}=h\cdot h^{-1}\cdot g=g$.

Using the queroperation (\ref{ck-m1g}) one can give a \textit{diagrammatic
definition} of a polyadic group \cite{gle/gla}: an $n$-\textit{ary group} is a
one-set algebraic structure (universal algebra)%
\begin{equation}
\mathcal{G}^{\left(  n\right)  }=\left\langle G\mid\mu^{\left(  n\right)
},\bar{\mu}^{\left(  1\right)  }\mid\text{associativity (\ref{ck-ghu}),
D\"{o}rnte relations (\ref{ck-mgnn}) }\right\rangle , \label{ck-diam5}%
\end{equation}
where $\mu^{\left(  n\right)  }$ is a $n$-ary associative multiplication and
$\bar{\mu}^{\left(  1\right)  }$ is the queroperation (\ref{ck-m1g}).

\section{\textsc{Grothendieck group of commutative monoid}}

First, we describe the standard Grothendieck construction (see, e.g.
\cite{karoubi,rosenberg,weibel}) of a commutative group from a commutative
semigroup with identity (monoid). We will use multiplicative notation, which
will allow us to provide a straightforward \textquotedblleft
polyadization\textquotedblright\ according to the arity invariance principle
\cite{dup2021b}.

Let us have a (binary, arity $m=2$) commutative monoid $\mathcal{S}%
\equiv\mathcal{S}^{\left(  2\right)  }=\left\langle S\mid\mu=\mu^{\left(
2\right)  }\cong\left(  \cdot\right)  \mid assoc\right\rangle $, where $S$ is
the underlying set, and $\mu=\mu^{\left(  2\right)  }:S\times S\rightarrow S$
is the (associative) multiplication in $\mathcal{S}$. The Cartesian product of
two underlying sets $S^{\prime}=S\times S$ can be endowed with the
componentwise multiplication $\left(  a_{1},b_{1}\right)  \bullet^{\prime
}\left(  a_{2},b_{2}\right)  =\left(  a_{1}\cdot a_{2},b_{1}\cdot
b_{2}\right)  $, $a_{i},b_{i}\in S$, to define the binary direct product
$\mathcal{S}^{\prime}=\mathcal{S}^{\prime\left(  2\right)  }=\left\langle
S^{\prime}\mid\mathbf{\mu}^{\prime}=\mathbf{\mu}^{\prime\left(  2\right)
}\equiv\left(  \bullet^{\prime}\right)  \right\rangle $ (which coincides with
the direct sum, because of the finite number of factors in the product).

For convenience and conciseness, we introduce the doubles%
\begin{equation}
\mathbf{S}=\left(
\begin{array}
[c]{c}%
a\\
b
\end{array}
\right)  \in S\times S \label{ck-sab}%
\end{equation}
and use vector-like notation for the multiplication (being the Kronecker
product of the doubles)%
\begin{equation}
\mathbf{S}_{1}\bullet^{\prime}\mathbf{S}_{2}=\left(
\begin{array}
[c]{c}%
a_{1}\\
b_{1}%
\end{array}
\right)  \bullet^{\prime}\left(
\begin{array}
[c]{c}%
a_{2}\\
b_{2}%
\end{array}
\right)  =\left(
\begin{array}
[c]{c}%
a_{1}\cdot a_{2}\\
b_{1}\cdot b_{2}%
\end{array}
\right)  ,\ \ \mathbf{S}_{i}\in S^{\prime},\ \ a_{i},b_{i}\in S, \label{ck-gg}%
\end{equation}
or in \textquotedblleft polyadic\textquotedblright\ notation%
\begin{equation}
\mathbf{\mu}^{\prime}\left[  \mathbf{S}_{1},\mathbf{S}_{2}\right]  =\left(
\begin{array}
[c]{c}%
\mu\left[  a_{1},a_{2}\right] \\[5pt]%
\mu\left[  b_{1},b_{2}\right]
\end{array}
\right)  . \label{ck-mg12}%
\end{equation}

The associativity of the direct product $\mathbf{\mu}^{\prime}$ follows
immediately from that of $\mu$, because of the componentwise multiplication in
(\ref{ck-mg12}). Since $\mathcal{S}$ is a monoid with the neutral element
(identity) $e\in S$, satisfying $\mu\left[  e,a\right]  =\mu\left[
a,e\right]  =a$, $a\in S$, then the identity of the direct product
$\mathcal{S}^{\prime}$ is the double%
\begin{equation}
\mathbf{E}=\left(
\begin{array}
[c]{c}%
e\\
e
\end{array}
\right)  , \label{ck-ee0}%
\end{equation}
such that%
\begin{equation}
\mathbf{\mu}^{\prime}\left[  \mathbf{E},\mathbf{G}\right]  =\mathbf{\mu
}^{\prime}\left[  \mathbf{G},\mathbf{E}\right]  =\mathbf{G}\in S\times S.
\label{ck-me}%
\end{equation}
Therefore, $\mathcal{S}^{\prime}=\mathcal{S}\times\mathcal{S}$ is a
commutative monoid, as is $\mathcal{S}$.

Another associative direct product $\mathcal{S}^{\prime\prime}=\mathcal{S}%
^{\prime\prime\left(  2\right)  }=\left\langle S^{\prime}\mid\mathbf{\mu
}^{\prime\prime}=\mathbf{\mu}_{2}^{\prime\prime}\equiv\left(  \bullet
^{\prime\prime}\right)  \right\rangle $ can be obtained using the
\textquotedblleft twisted\textquotedblright\ multiplication of the doubles
$\mathbf{\mu}^{\prime\prime}$ defined by%
\begin{equation}
\mathbf{S}_{1}\bullet^{\prime\prime}\mathbf{S}_{2}=\left(
\begin{array}
[c]{c}%
a_{1}\\
b_{1}%
\end{array}
\right)  \bullet^{\prime\prime}\left(
\begin{array}
[c]{c}%
a_{2}\\
b_{2}%
\end{array}
\right)  =\left(
\begin{array}
[c]{c}%
a_{1}\cdot b_{2}\\
a_{2}\cdot b_{1}%
\end{array}
\right)  , \label{ck-ggt}%
\end{equation}
or%
\begin{equation}
\mathbf{\mu}^{\prime\prime}\left[  \mathbf{S}_{1},\mathbf{S}_{2}\right]
=\left(
\begin{array}
[c]{c}%
\mu_{2}\left[  a_{1},b_{2}\right] \\[5pt]%
\mu_{2}\left[  a_{2},b_{1}\right]
\end{array}
\right)  . \label{ck-mg12t}%
\end{equation}

The neutral element (identity) $\mathbf{E}$ in the \textquotedblleft
twisted\textquotedblright\ direct product $\mathcal{S}^{\prime\prime}$
coincides with (\ref{ck-ee0}), and therefore $\mathcal{S}^{\prime\prime
}=\mathcal{S}\times\mathcal{S}$ is a commutative monoid as well.

The question arises: how to construct a (binary) group corresponding to the
binary monoid $\mathcal{S}=\mathcal{S}_{2}$ which would reflect its
substantial and important properties? The answer was provided by Grothendieck:
to consider the equivalence relations and corresponding classes in the direct
product $\mathcal{S}\times\mathcal{S}$. Because, indeed on classes one can
define the inverse elements which are needed to build a group (in addition to
associativity and existence of neutral elements which are sufficient for
monoids). Here we briefly reproduce the construction of the Grothendieck group
corresponding to the commutative monoid $\mathcal{S}$ (sometimes this is
called the symmetrization of $\mathcal{S}$ \cite{karoubi} or the group
completion of $\mathcal{S}$ \cite{rosenberg,weibel}) in multiplicative
notation, which will allow us to provide its \textquotedblleft
polyadization\textquotedblright\ in a straightforward way.

Let us consider two kinds of equivalence relations on the direct product
$\mathcal{S}^{\prime}=\mathcal{S}\times\mathcal{S}$. The first one $\left(
\sim_{1}\right)  $ is reminiscent of \textquotedblleft gauge
invariance\textquotedblright\ (in physical language), because it identifies
the doubles with equal \textquotedblleft shifts\textquotedblright, such that%
\begin{equation}
\left(
\begin{array}
[c]{c}%
a_{1}\\
b_{1}%
\end{array}
\right)  \sim_{1}\left(
\begin{array}
[c]{c}%
a_{2}\\
b_{2}%
\end{array}
\right)  \Longleftrightarrow\exists_{x,y\in S}\ \left(
\begin{array}
[c]{c}%
\mu\left[  a_{1},x\right] \\[5pt]%
\mu\left[  b_{1},x\right]
\end{array}
\right)  =\left(
\begin{array}
[c]{c}%
\mu\left[  a_{2},y\right] \\[5pt]%
\mu\left[  b_{2},y\right]
\end{array}
\right)  ,\ \ a_{i},b_{i}\in S, \label{ck-m2a}%
\end{equation}
and we call it the \textit{\textquotedblleft gauge\textquotedblright\ shifts}.
The second equivalence relation $\left(  \sim_{2}\right)  $ uses only one
\textquotedblleft shift\textquotedblright\ as follows%
\begin{equation}
\left(
\begin{array}
[c]{c}%
a_{1}\\
b_{1}%
\end{array}
\right)  \sim_{2}\left(
\begin{array}
[c]{c}%
a_{2}\\
b_{2}%
\end{array}
\right)  \Longleftrightarrow\exists_{z\in S}\ \left(  \mu\right)  ^{\circ
2}\left[  a_{1},b_{2},z\right]  =\left(  \mu\right)  ^{\circ2}\left[
a_{2},b_{1},z\right]  ,\ \ a_{i},b_{i}\in S, \label{ck-m2b}%
\end{equation}
so we call this the \textit{\textquotedblleft twisted\textquotedblright%
\ shift} (which was used originally by Grothendieck).

\begin{assertion}
\label{ck-as-eq}Two equivalence relations above coincide $\sim_{1}=\sim
_{2}\equiv\sim$.
\end{assertion}

\begin{proof}
Let (\ref{ck-m2b}) holds, then putting $x=\mu\left[  z,b_{2}\right]  $,
$y=\mu\left[  a_{1},z\right]  $, we obtain (\ref{ck-m2a}). Conversely, from
(\ref{ck-m2a}) with%
\begin{equation}
z=\mu\left[  x,y\right]  \label{ck-zxy}%
\end{equation}
it follows that%
\begin{equation}
\left(  \mu\right)  ^{\circ2}\left[  a_{1},b_{2},z\right]  =\mu\left[
\mu\left[  a_{1},x\right]  ,\mu\left[  y,b_{2}\right]  \right]  =\mu\left[
\mu\left[  y,b_{1}\right]  ,\mu\left[  a_{2},x\right]  \right]  =\left(
\mu\right)  ^{\circ2}\left[  a_{2},b_{1},z\right]  .
\end{equation}

\end{proof}

The group completion of the commutative monoid $\mathcal{S}$ is defined as the
(binary $n=2$) group $\widetilde{\mathcal{G}}^{\left(  2\right)  }$ (of
isomorphism classes) being the factorization of $\mathcal{S}\times\mathcal{S}$
by the equivalence relation $\left(  \sim\right)  $ given in (\ref{ck-m2a}) or
(\ref{ck-m2b}). It is called the Grothendieck group, and is usually denoted by
$\mathsf{K}_{0}\left(  \mathcal{S}\right)  =\widetilde{\mathcal{G}}^{\left(
2\right)  }$ (for this concrete case of monoid \cite{rosenberg,weibel})%
\begin{equation}
\mathsf{K}_{0}\left(  \mathcal{S}\right)  =\mathcal{S}\times\mathcal{S}%
\diagup\sim. \label{ck-gs}%
\end{equation}

The representatives, \textquotedblleft observables\textquotedblright\ (in
physical language), are \textquotedblleft gauge invariant\textquotedblright%
\ doubles (\ref{ck-m2a})%
\begin{equation}
\widetilde{\mathfrak{G}}=\left[
\begin{array}
[c]{c}%
a\\
b
\end{array}
\right]  . \label{ck-dg}%
\end{equation}

The multiplication of the representatives $\widetilde{\mathfrak{G}}$ in
$\mathsf{K}_{0}\left(  \mathcal{S}\right)  $ inherits the product of doubles
in $\mathcal{S}\times\mathcal{S}$ (\ref{ck-mg12})%
\begin{equation}
\widetilde{\mathbf{\mu}}\left[  \widetilde{\mathfrak{G}}_{1},\widetilde
{\mathfrak{G}}_{2}\right]  =\left[
\begin{array}
[c]{c}%
\mu\left[  a_{1},a_{2}\right] \\[5pt]%
\mu\left[  b_{1},b_{2}\right]
\end{array}
\right]  , \label{ck-mw}%
\end{equation}
where the elements of the resulting binary factor group $\mathsf{K}_{0}\left(
\mathcal{S}\right)  =\widetilde{\mathcal{G}}^{\left(  2\right)  }%
=\widetilde{\mathcal{G}}=\left\langle \left\{  \widetilde{\mathfrak{G}%
}\right\}  \mid\widetilde{\mathbf{\mu}}\right\rangle $ (the representative
doubles $\widetilde{\mathfrak{G}}$ (\ref{ck-dg}) and the binary operation
$\widetilde{\mathbf{\mu}}=\widetilde{\mathbf{\mu}}^{\left(  2\right)  }$ on
the classes are marked by waves). The structure of the Grothendieck group for
monoids $\mathsf{K}_{0}\left(  \mathcal{S}\right)  $ was considered as the
staring example in, e.g., \cite{karoubi, rosenberg, weibel}.

The first equivalence relation $\left(  \sim_{1}\right)  $ gives the form of
the neutral element in $\mathsf{K}_{0}\left(  \mathcal{S}\right)  $%
\begin{equation}
\widetilde{\mathfrak{E}}=\left[
\begin{array}
[c]{c}%
e\\
e
\end{array}
\right]  \sim\left[
\begin{array}
[c]{c}%
a\\
a
\end{array}
\right]  ,\ \ \forall a\in S, \label{ck-gee}%
\end{equation}
where $e\in S$ is the identity of the monoid $\mathcal{S}$. Indeed the shape
(\ref{ck-gee}) of the neutral element and commutativity of the initial monoid
$\mathcal{S}$ allows us to obtain the inverse element in $\mathsf{K}%
_{0}\left(  \mathcal{S}\right)  $ using the multiplication (\ref{ck-mw}) in
the following way
\begin{equation}
\widetilde{\mathbf{\mu}}\left[  \left[
\begin{array}
[c]{c}%
a\\
b
\end{array}
\right]  ,\left[
\begin{array}
[c]{c}%
b\\
a
\end{array}
\right]  \right]  =\left[
\begin{array}
[c]{c}%
\mu\left[  a,b\right] \\[5pt]%
\mu\left[  b,a\right]
\end{array}
\right]  =\left[
\begin{array}
[c]{c}%
\mu\left[  a,b\right] \\[5pt]%
\mu\left[  a,b\right]
\end{array}
\right]  \sim\left[
\begin{array}
[c]{c}%
e\\
e
\end{array}
\right]  , \label{ck-ma0}%
\end{equation}
or%
\begin{equation}
\widetilde{\mathbf{\mu}}\left[  \widetilde{\mathfrak{G}},\widetilde
{\mathfrak{G}}^{-1}\right]  =\widetilde{\mathfrak{E}}, \label{ck-mgge}%
\end{equation}
where the unique inverse is%
\begin{equation}
\widetilde{\mathfrak{G}}^{-1}=\left[
\begin{array}
[c]{c}%
a\\
b
\end{array}
\right]  ^{-1}=\left[
\begin{array}
[c]{c}%
b\\
a
\end{array}
\right]  . \label{ck-g1}%
\end{equation}

Thus, $\widetilde{\mathcal{G}}=\mathsf{K}_{0}\left(  \mathcal{S}\right)  $ is
indeed a (binary) commutative group (of classes) corresponding to the (binary)
monoid $\mathcal{S}=\mathcal{S}_{2}$. Using (\ref{ck-m2a}) and (\ref{ck-gee}),
the homomorphism (of monoids) $\mathbf{\Phi}_{SG}:\mathcal{S}\rightarrow
\mathsf{K}_{0}\left(  \mathcal{S}\right)  $ can be written as%
\begin{equation}
\mathbf{\Phi}_{SG}\left(  a\right)  =\left[
\begin{array}
[c]{c}%
\mu\left[  a,a\right] \\
a
\end{array}
\right]  ,\ \ \ \ \forall a\in S, \label{ck-fsg}%
\end{equation}
where we did not use the identity $e\in S$ (this can be important in the
\textquotedblleft polyadization\textquotedblright\ below). It follows from
(\ref{ck-gee}), that $\mathbf{\Phi}_{SG}$ can be written with the identity in
the form%
\begin{equation}
\mathbf{\Phi}_{SG}\left(  a\right)  =\left[
\begin{array}
[c]{c}%
a\\
e
\end{array}
\right]  ,\ \ \ \ \forall a\in S,\ e\in S. \label{ck-fsg1}%
\end{equation}

Using (\ref{ck-g1}) we observe that the image of $\mathbf{\Phi}_{SG}$ actually
generates a group (being $\mathsf{K}_{0}\left(  \mathcal{S}\right)  $),
because%
\begin{equation}
\left[
\begin{array}
[c]{c}%
a\\
b
\end{array}
\right]  =\widetilde{\mathbf{\mu}}\left[  \mathbf{\Phi}_{SG}\left(  a\right)
,\left(  \mathbf{\Phi}_{SG}\left(  b\right)  \right)  ^{-1}\right]
,\ \ \forall a,b\in S. \label{ck-abm}%
\end{equation}

The universal property can be shown in the following way (see, e.g.,
\cite{rosenberg}). Let us consider any commutative (binary) group
$\widetilde{\mathcal{G}}^{\prime}=\left\langle G^{\prime}\mid\widetilde
{\mathbf{\mu}}^{\prime}\right\rangle $ and the group homomorphism
$\mathbf{\Phi}_{GG^{\prime}}:\widetilde{\mathcal{G}}\rightarrow\widetilde
{\mathcal{G}}^{\prime}$, then there exists the unique homomorphism
$\mathbf{\Phi}_{SG^{\prime}}:\mathcal{S}\rightarrow\widetilde{\mathcal{G}%
}^{\prime}$, such that%
\begin{equation}
\mathbf{\Phi}_{SG^{\prime}}=\mathbf{\Phi}_{GG^{\prime}}\circ\mathbf{\Phi}%
_{SG}. \label{ck-fff}%
\end{equation}

Indeed, we derive, using (\ref{ck-fsg1}) and (\ref{ck-abm})%
\begin{align}
\mathbf{\Phi}_{GG^{\prime}}\left(  \left[
\begin{array}
[c]{c}%
a\\
b
\end{array}
\right]  \right)   &  =\mathbf{\Phi}_{GG^{\prime}}\left(  \widetilde
{\mathbf{\mu}}\left[  \left[
\begin{array}
[c]{c}%
a\\
e
\end{array}
\right]  ,\left(  \left[
\begin{array}
[c]{c}%
b\\
e
\end{array}
\right]  \right)  ^{-1}\right]  \right)  =\widetilde{\mathbf{\mu}}^{\prime
}\left[  \mathbf{\Phi}_{GG^{\prime}}\left(  \left[
\begin{array}
[c]{c}%
a\\
e
\end{array}
\right]  \right)  ,\left(  \mathbf{\Phi}_{GG^{\prime}}\left(  \left[
\begin{array}
[c]{c}%
b\\
e
\end{array}
\right]  \right)  \right)  ^{-1}\right] \nonumber\\
&  =\widetilde{\mathbf{\mu}}\left[  \mathbf{\Phi}_{GG^{\prime}}\left(
\mathbf{\Phi}_{SG}\left(  a\right)  \right)  ,\left(  \mathbf{\Phi
}_{GG^{\prime}}\left(  \mathbf{\Phi}_{SG}\left(  b\right)  \right)  \right)
^{-1}\right]  =\widetilde{\mathbf{\mu}}^{\prime}\left[  \mathbf{\Phi
}_{SG^{\prime}}\left(  a\right)  ,\left(  \mathbf{\Phi}_{SG^{\prime}}\left(
b\right)  \right)  ^{-1}\right]  . \label{ck-ff}%
\end{align}
Conversely, for a given homomorphism $\mathbf{\Phi}_{SG^{\prime}}%
:\mathcal{S}\rightarrow\widetilde{\mathcal{G}}^{\prime}$ the group
homomorphism $\mathbf{\Phi}_{GG^{\prime}}:\widetilde{\mathcal{G}}%
\rightarrow\widetilde{\mathcal{G}}^{\prime}$ is uniquely defined by
(\ref{ck-ff}) with $\mathbf{\Phi}_{SG^{\prime}}=\mathbf{\Phi}_{GG^{\prime}%
}\circ\mathbf{\Phi}_{SG}$ (\ref{ck-fff}).

\begin{example}
The simplest example is the commutative monoid of nonnegative integers
(natural numbers with the zero $\mathbb{N}_{0}$) under addition $\mathcal{S}%
=\left\langle \mathbb{N}_{0}\mid\mu=\left(  +\right)  \right\rangle $, and
$e=0$. The elements of the $\mathsf{K}_{0}\left(  \mathcal{S}\right)
=\left\langle \mathbb{N}_{0}\times\mathbb{N}_{0}\mid\widetilde{\mathbf{\mu}%
}=\left(  \widetilde{+}\right)  \right\rangle $ are doubles of natural numbers
$\left(
\begin{array}
[c]{c}%
n\\
m
\end{array}
\right)  \in\mathbb{N}_{0}\times\mathbb{N}_{0}$, and the neutral double is
$\left(
\begin{array}
[c]{c}%
0\\
0
\end{array}
\right)  $. The equivalence relation (\ref{ck-m2b}) becomes%
\begin{equation}
\left(
\begin{array}
[c]{c}%
n_{1}\\
m_{1}%
\end{array}
\right)  \sim\left(
\begin{array}
[c]{c}%
n_{2}\\
m_{2}%
\end{array}
\right)  \Longleftrightarrow n_{1}+m_{2}=n_{2}+m_{1},\ \ \ n_{i},m_{i}%
\in\mathbb{N}_{0}, \label{ck-nm}%
\end{equation}
because the monoid $\mathcal{S}$ is cancellative. It follows from
(\ref{ck-nm}) by scaling, that in $\mathsf{K}_{0}\left(  \mathcal{S}\right)  $
there exist two minimal representatives $\left[
\begin{array}
[c]{c}%
n\\
0
\end{array}
\right]  $ and $\left[
\begin{array}
[c]{c}%
0\\
m
\end{array}
\right]  $, and from (\ref{ck-ma0}) we have%
\begin{equation}
\left[
\begin{array}
[c]{c}%
n\\
0
\end{array}
\right]  \widetilde{+}\left[
\begin{array}
[c]{c}%
0\\
n
\end{array}
\right]  =\left[
\begin{array}
[c]{c}%
n\\
n
\end{array}
\right]  =\left[
\begin{array}
[c]{c}%
0\\
0
\end{array}
\right]  .
\end{equation}

Therefore, the representatives $\left[
\begin{array}
[c]{c}%
n\\
0
\end{array}
\right]  $ and $\left[
\begin{array}
[c]{c}%
0\\
m
\end{array}
\right]  $ can be treated as positive and negative integers in $\mathbb{Z}$,
such that there exists the homomorphism $\mathsf{K}_{0}\left(  \mathbb{N}%
_{0}\right)  \rightarrow\mathbb{Z}$ defined by%
\begin{equation}
\left[
\begin{array}
[c]{c}%
n\\
m
\end{array}
\right]  \mapsto n-m\in\mathbb{Z}, \label{ck-nmz}%
\end{equation}
which is a bijection due to (\ref{ck-nm}). The universal property
(\ref{ck-ff}) becomes%
\begin{align}
\mathbf{\Phi}_{GG^{\prime}}\left(  \left[
\begin{array}
[c]{c}%
n\\
m
\end{array}
\right]  \right)   &  =\mathbf{\Phi}_{GG^{\prime}}\left(  \left[
\begin{array}
[c]{c}%
n\\
0
\end{array}
\right]  -\left[
\begin{array}
[c]{c}%
m\\
0
\end{array}
\right]  \right)  =\mathbf{\Phi}_{GG^{\prime}}\left(  \left[
\begin{array}
[c]{c}%
n\\
0
\end{array}
\right]  \right)  -\mathbf{\Phi}_{GG^{\prime}}\left(  \left[
\begin{array}
[c]{c}%
m\\
0
\end{array}
\right]  \right) \nonumber\\
&  =\mathbf{\Phi}_{GG^{\prime}}\left(  \mathbf{\Phi}_{SG}\left(  n\right)
\right)  -\mathbf{\Phi}_{GG^{\prime}}\left(  \mathbf{\Phi}_{SG}\left(
m\right)  \right)  =\mathbf{\Phi}_{SG^{\prime}}\left(  n\right)
-\mathbf{\Phi}_{SG^{\prime}}\left(  m\right)  .
\end{align}
Thus, the Grothendieck group of the monoid $\mathbb{N}_{0}$ of natural numbers
with zero is the group of integers $\mathbb{Z}$ under addition $\mathsf{K}%
_{0}\left(  \mathbb{N}_{0}\right)  =\mathbb{Z}$.
\end{example}

\section{$n$\textsc{-ary group completion of }$m$\textsc{-ary semigroup}}

Here we propose the \textquotedblleft polyadization\textquotedblright\ (along
the ideas of \cite{dup2019}) of the group completion concept using the arity
invariance principle \cite{dup2021b}. By considering the polyadic algebraic
structures, the main differences with the binary case will be the following
\cite{dup2022} (and refs therein):

\begin{enumerate}
\item There can exist several associative polyadic direct products (powers)
which can be built from one polyadic semigroup.

\item If initial $m$-ary semigroup is commutative, the polyadic direct product
can be noncommutative.

\item The arity $n$ of the direct product of $m$-ary semigroups can be not
equal to $m$.

\item The neutral element (identity) is not necessary for the $m$-ary
semigroup and its power, because the polyadic analog of the binary inverse
element is the querelement of an $n$-ary group, which is defined without the
usage of a neutral element, and moreover some $n$-ary groups do not contain an
identity at all.
\end{enumerate}

\subsection{\label{ck-subsec-dp}Polyadic direct power construction}

Let us consider the $m$-ary semigroup $\mathcal{S}^{\left(  m\right)
}=\left\langle S\mid\mu^{\left(  m\right)  }\mid total\ assoc\right\rangle $,
where $S$ is the underlying set, and $\mu^{\left(  m\right)  }:S^{\times
m}\rightarrow S$ is the (totally associative) $m$-ary multiplication in
$\mathcal{S}$. The Cartesian product of two underlying sets $S^{\prime
}=S^{\times m}$ can be endowed with an associative $n$-ary multiplication in
various different ways, and also it can be that $n\neq m$ \cite{dup2022}.
Again, we introduce the doubles $\mathbf{S}=\left(
\begin{array}
[c]{c}%
a\\
b
\end{array}
\right)  \in S\times S$, and the polyadic direct product which is the $n$-ary
semigroup of doubles $\mathcal{S}^{\prime\left(  n\right)  }=\left\langle
S^{\prime}\mid\mathbf{\mu}^{\prime\left(  n\right)  }\mid
total\ assoc\right\rangle $.

There are two possibilities to build the polyadic associative direct product
\cite{dup2022}:

\begin{enumerate}
\item The componentwise construction which corresponds to the standard binary
direct product (\ref{ck-mg12}).

\item The noncomponentwise one, which corresponds to the twisted direct
product (\ref{ck-mg12t}).
\end{enumerate}

In the first case, the $n$-ary semigroup of doubles $\mathcal{S}%
^{\prime\left(  n\right)  }$ corresponds to the full polyadic external product
of \cite{dup2022}.

\begin{definition}
An \textit{ }$n$-ary \textit{componentwise} \textit{direct product} (power)
semigroup of doubles consists of two $n$-ary semigroups $\mathcal{S}%
^{\prime\left(  n\right)  }=\mathcal{S}^{\left(  n\right)  }\times
\mathcal{S}^{\left(  n\right)  }$ (of \textsf{the same} arity)%
\begin{equation}
\mathbf{\mu}^{\prime\left(  n\right)  }\left[  \mathbf{S}_{1},\mathbf{S}%
_{2},\ldots,\mathbf{S}_{n}\right]  =\left(
\begin{array}
[c]{c}%
\mu^{\left(  n\right)  }\left[  a_{1},a_{2},\ldots,a_{n}\right] \\[5pt]%
\mu^{\left(  n\right)  }\left[  b_{1},b_{2},\ldots,b_{n}\right]
\end{array}
\right)  ,\ \ \ \ \mathbf{S}_{i}=\left(
\begin{array}
[c]{c}%
a_{i}\\
b_{i}%
\end{array}
\right)  ,\ \ a_{i},b_{i}\in S,\ \label{ck-mg1}%
\end{equation}
where the (total) polyadic associativity (\ref{ck-ghu}) of $\mathbf{\mu
}^{\prime\left(  n\right)  }$ is governed by the associativity of the
constituent semigroup $\mathcal{S}^{\left(  n\right)  }$.
\end{definition}

The simplest case is give by

\begin{definition}
\label{ck-def-der}A \textit{ full polyadic direct product (power)}
$\mathcal{S}^{\prime\left(  n\right)  }=\mathcal{S}^{\left(  n\right)  }%
\times\mathcal{S}^{\left(  n\right)  }$ is called \textit{derived}, if the
constituent $n$-ary semigroup $\mathcal{S}^{\left(  n\right)  }$ is derived,
such that its operation $\mu^{\left(  n\right)  }$ is composition of the
binary operations $\mu^{\left(  2\right)  }$.
\end{definition}

In the derived case all the operations in (\ref{ck-mg1}) have the form (see
(\ref{ck-mn})--(\ref{ck-n}))%
\begin{equation}
\mu^{\left(  n\right)  }=\left(  \mu^{\left(  2\right)  }\right)
^{\circ\left(  n-1\right)  },\mu^{\left(  2\right)  }=\left(  \cdot\right)
,\ \ \ \mathbf{\mu}^{\prime\left(  n\right)  }=\left(  \mathbf{\mu}%
^{\prime\left(  2\right)  }\right)  ^{\circ\left(  n-1\right)  },\mathbf{\mu
}^{\left(  2\right)  }=\left(  \bullet^{\prime}\right)  . \label{ck-m2}%
\end{equation}

The operations of the derived polyadic direct product can be written as (cf.
the binary case (\ref{ck-mg12}))%
\begin{equation}
\mathbf{\mu}_{der}^{\prime\left(  n\right)  }\left[  \mathbf{S}_{1}%
,\mathbf{S}_{2},\ldots,\mathbf{S}_{n}\right]  =\mathbf{S}_{1}\bullet^{\prime
}\mathbf{S}_{2}\bullet^{\prime}\ldots\bullet^{\prime}\mathbf{S}_{n}=\left(
\begin{array}
[c]{c}%
a_{1}\cdot a_{2}\cdot\ldots\cdot a_{n}\\[5pt]%
b_{1}\cdot b_{2}\cdot\ldots\cdot b_{n}%
\end{array}
\right)  ,
\end{equation}
and so it is simply a repitition of the binary products (\ref{ck-mg12}).
Therefore, it would be more interesting to consider nonderived polyadic
analogs of the direct product which do not come down to the binary ones.

The nonderived version of the polyadic direct product, the hetero
(\textquotedblleft entangled\textquotedblright) product, was introduced in
\cite{dup2022} for an arbitrary number of constituents. Here we apply it for
two $m$-ary semigroups to get the nonderived associative direct product, which
can have a different arity $n\neq m$.

The general structure of the hetero product formally coincides
(\textquotedblleft reversely\textquotedblright)\ with the main heteromorphism
equation \cite{dup2018a}. The additional parameter which determines the arity
$n$ of the hetero power of the initial $m$-ary semigroup is the number of
intact elements $\ell_{\operatorname*{id}}$ and number of constituents $k$. In
our case of $k=2$ multipliers, we have only two possibilities $\ell
_{\operatorname*{id}}=0,1$. Thus, we arrive at

\begin{definition}
The \textit{hetero }(\textit{\textquotedblleft entangled\textquotedblright%
})\textit{ power} $\boxtimes$\textit{ (square)} of the $m$-ary semigroup
$\mathcal{S}^{\left(  m\right)  }=\left\langle S\mid\mu^{\left(  m\right)
}\right\rangle $ is the $n$-ary semigroup defined on the Cartesian power
$S^{\prime}=S\times S$, such that $\mathcal{S}^{\prime\left(  n\right)
}=\left\langle S^{\prime}\mid\mathbf{\mu}^{\prime\left(  n\right)
}\right\rangle $,%
\begin{equation}
\mathcal{S}^{\prime\left(  n\right)  }=\mathcal{S}^{\left(  m\right)
}\boxtimes\mathcal{S}^{\left(  m\right)  },
\end{equation}
and the $n$-ary multiplication of doubles $\mathbf{S}_{ij}=\left(
\begin{array}
[c]{c}%
a_{i}\\
a_{j}%
\end{array}
\right)  \in S\times S$, $a_{i},a_{j}\in S$, is given (informally) by%
\begin{equation}
\mathbf{\mu}^{\prime\left(  n\right)  }\left[  \left(
\begin{array}
[c]{c}%
a_{1}\\
a_{2}%
\end{array}
\right)  ,\ldots,\left(
\begin{array}
[c]{c}%
a_{2n-1}\\
a_{2n}%
\end{array}
\right)  \right]  =\left\{
\genfrac{}{}{0pt}{0}{\left(
\begin{array}
[c]{c}%
\mu^{\left(  m\right)  }\left[  a_{1},\ldots,a_{m}\right]  ,\\
\mu^{\left(  m\right)  }\left[  a_{m+1},\ldots,a_{2m}\right]
\end{array}
\right)  ,\ \ \ \ell_{\operatorname*{id}}=0,\ \ n=m,}{\left(
\begin{array}
[c]{c}%
\mu^{\left(  m\right)  }\left[  a_{1},\ldots,a_{m}\right]  ,\\
a_{m+1}%
\end{array}
\right)  ,\ \ \ \ell_{\operatorname*{id}}=1,\ \ n=\frac{m+1}{2},}%
\right.  , \label{ck-mmnn}%
\end{equation}
where $\ell_{\operatorname*{id}}=0,1$ is the number of \textit{intact
elements} in the r.h.s. of the polyadic direct product. The hetero power
arities are connected by the \textit{arity changing formula} (with $k=2$)
\cite{dup2018a}%
\begin{equation}
n=m-\dfrac{m-1}{2}\ell_{\operatorname*{id}}, \label{ck-nnk}%
\end{equation}
with the integer $\dfrac{n-1}{2}\ell_{\operatorname*{id}}\geq1$.
\end{definition}

In the case $\ell_{\operatorname*{id}}=1$, the initial and final arities $m$
and $n$ are not arbitrary, but \textquotedblleft quantized\textquotedblright%
\ such that the fraction in (\ref{ck-nnk}) has to be an integer (see the first
row in \textsc{Table 1} of \cite{dup2022})%
\begin{equation}%
\begin{tabular}
[c]{ccccc}%
$m=$ & $3,$ & $5,$ & $7,$ & $\ldots$\\
$n=$ & $2,$ & $3,$ & $4,$ & $\ldots$%
\end{tabular}
\ \label{ck-m32}%
\end{equation}

The concrete placement of elements and multiplications in (\ref{ck-mmnn}) to
obtain the associative $\mathbf{\mu}^{\prime\left(  n^{\prime}\right)  }$ is
governed by the associativity quiver technique \cite{dup2018a}.

Thus, the classification of the hetero powers for a nonbinary initial
semigroup $m\geq3$ consists of two limiting cases:

\begin{enumerate}
\item \textit{Intactless power}: there are no intact elements $\ell
_{\operatorname*{id}}=0$. The arity of the hetero power reaches its
\textsf{maximum} and coincides with the arity of the initial semigroup $n=m$.
Since $m\geq3$, there is no binary hetero product.

\item \textit{Binary power}: the final semigroup is of lowest arity, i.e.
binary $n=2$, and it follows from (\ref{ck-m32}), that the only possibility is
$m=3$.
\end{enumerate}

Let us consider some concrete examples of the hetero powers.

\begin{example}
\label{ck-ex-m2}Let $\mathcal{S}^{\left(  3\right)  }=\left\langle S\mid
\mu^{\left(  3\right)  }\right\rangle $ be a commutative ternary semigroup,
then we can construct its square of the doubles $\mathbf{S}=\left(
\begin{array}
[c]{c}%
a\\
b
\end{array}
\right)  \in S\times S$ in two ways to obtain the associative hetero power%
\begin{equation}
\mathbf{\mu}^{\prime\left(  2\right)  }\left[  \mathbf{S}_{1},\mathbf{S}%
_{2}\right]  =\mathbf{\mu}^{\prime\left(  2\right)  }\left[  \left(
\begin{array}
[c]{c}%
a_{1}\\
b_{1}%
\end{array}
\right)  ,\left(
\begin{array}
[c]{c}%
a_{2}\\
b_{2}%
\end{array}
\right)  \right]  \left\{
\begin{array}
[c]{c}%
\left(
\begin{array}
[c]{c}%
\mu^{\left(  3\right)  }\left[  a_{1},b_{1},a_{2}\right] \\
b_{2}%
\end{array}
\right)  ,\\
\left(
\begin{array}
[c]{c}%
\mu^{\left(  3\right)  }\left[  a_{1},b_{2},a_{2}\right] \\
b_{1}%
\end{array}
\right)  ,
\end{array}
\right.  \ \ \ a_{i},b_{i}\in S. \label{ck-m2g}%
\end{equation}
This means that the Cartesian square $S\times S$ can be endowed with an
associative multiplication $\mathbf{\mu}^{\prime\left(  2\right)  }$ in two
ways, and therefore $\mathcal{S}^{\prime\left(  2\right)  }=\left\langle
S^{\prime}\mid\mathbf{\mu}^{\prime\left(  2\right)  }\right\rangle $ is a
noncommutative semigroup, being the hetero power $\mathcal{S}^{\prime\left(
2\right)  }=\mathcal{S}^{\left(  3\right)  }\boxtimes\mathcal{S}^{\left(
3\right)  }$. If the ternary semigroup $\mathcal{S}^{\left(  3\right)  }$ has
a ternary identity $e\in S$, then $\mathcal{S}^{\prime\left(  2\right)  }$ has
only the left (right) identity $\mathbf{E}=\left(
\begin{array}
[c]{c}%
e\\
e
\end{array}
\right)  \in S\times S$, since $\mathbf{\mu}^{\prime\left(  2\right)  }\left[
\mathbf{E},\mathbf{S}\right]  =\mathbf{S}$ ($\mathbf{\mu}^{\prime\left(
2\right)  }\left[  \mathbf{S},\mathbf{E}\right]  =\mathbf{S}$), but not the
right (left) identity. Thus, $\mathcal{S}^{\prime\left(  2\right)  }$ can be a
semigroup only, even in the case when $\mathcal{S}^{\left(  3\right)  }$ is a
ternary group.
\end{example}

\begin{example}
\label{ck-ex-m3}Take $\mathcal{S}^{\left(  3\right)  }=\left\langle S\mid
\mu^{\left(  3\right)  }\right\rangle $ to be a commutative ternary semigroup,
then the multiplication of the doubles is ternary and is
\textsf{noncomponentwise} (as opposed to the \textsf{componentwise} product
(\ref{ck-mg1}))%
\begin{equation}
\mathbf{\mu}^{\prime\left(  3\right)  }\left[  \mathbf{S}_{1},\mathbf{S}%
_{2},\mathbf{S}_{3}\right]  =\mathbf{\mu}^{\prime\left(  3\right)  }\left[
\left(
\begin{array}
[c]{c}%
a_{1}\\
b_{1}%
\end{array}
\right)  ,\left(
\begin{array}
[c]{c}%
a_{2}\\
b_{2}%
\end{array}
\right)  ,\left(
\begin{array}
[c]{c}%
a_{3}\\
b_{3}%
\end{array}
\right)  \right]  =\left(
\begin{array}
[c]{c}%
\mu^{\left(  3\right)  }\left[  a_{1},b_{2},a_{3}\right] \\
\mu^{\left(  3\right)  }\left[  b_{1},a_{2},b_{3}\right]
\end{array}
\right)  ,\ \ \ a_{i},b_{i}\in S. \label{ck-ncom}%
\end{equation}
Nevertheless $\mathbf{\mu}^{\prime\left(  3\right)  }$ is associative (and is
described by the Post-like associative quiver \cite{dup2018a}), therefore the
hetero power $\mathcal{S}^{\prime\left(  3\right)  }=\mathcal{S}^{\left(
3\right)  }\boxtimes\mathcal{S}^{\left(  3\right)  }$ is indeed the
noncommutative ternary semigroup $\mathcal{S}^{\prime\left(  3\right)
}=\left\langle G\times G\mid\mathbf{\mu}^{\prime\left(  3\right)
}\right\rangle $. In this case, as opposed to the previous example, the
existence of the ternary identity $e$ in $\mathcal{S}^{\left(  3\right)  }$
implies the ternary identity in the direct product $\mathcal{S}^{\prime\left(
3\right)  }$ by $\mathbf{E}=\left(
\begin{array}
[c]{c}%
e\\
e
\end{array}
\right)  $, such that%
\begin{equation}
\mathbf{\mu}^{\prime\left(  3\right)  }\left[  \mathbf{S},\mathbf{E}%
,\mathbf{E}\right]  =\mathbf{\mu}^{\prime\left(  3\right)  }\left[
\mathbf{E},\mathbf{S},\mathbf{E}\right]  =\mathbf{\mu}^{\prime\left(
3\right)  }\left[  \mathbf{E},\mathbf{E,S}\right]  =\mathbf{S}.
\end{equation}

\end{example}

\begin{proposition}
If the initial $m$-ary semigroup $\mathcal{S}^{\left(  m\right)  }$ contains
an identity, then the hetero square $n$-ary semigroup $\mathcal{S}%
^{\prime\left(  n\right)  }=\mathcal{S}^{\left(  m\right)  }\boxtimes
\mathcal{S}^{\left(  m\right)  }$ can contain an identity only in the
\textsf{intactless case} and the Post-like quiver \cite{dup2018a}. For the
binary power $n=2$ only the one-sided identity is possible.
\end{proposition}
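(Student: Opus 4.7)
The plan is to decide the existence of an identity by placing $\mathbf{E}=\left(\begin{array}[c]{c} e \\ e \end{array}\right)$ in all but one slot of the hetero-product $\mathbf{\mu}^{\prime(n)}$ and reading off what each of the two output components becomes. The governing tools are just the defining formula (\ref{ck-mmnn}) of the hetero product together with the polyadic-identity axiom (\ref{ck-e}) applied to $\mu^{(m)}$, i.e.\ $\mu^{(m)}[g,e^{m-1}]=g$ with $g$ on any place.

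First I would treat the intactless case $\ell_{\operatorname{id}}=0$, $n=m$. Here both components of $\mathbf{\mu}^{\prime(n)}[\mathbf{S}_{1},\ldots,\mathbf{S}_{n}]$ are full $m$-ary products of entries drawn from the $2n=2m$ input scalars, with each of the $2m$ scalars used exactly once (so the two output slots partition the inputs). Substituting $\mathbf{S}_{j}=\mathbf{E}$ for every $j\neq i$ and $\mathbf{S}_{i}=\left(\begin{array}[c]{c} a \\ b \end{array}\right)$, each output component collapses, via (\ref{ck-e}), to the single non-$e$ entry contributed at position $i$. The Post-like associative quiver is singled out precisely by the property that at every position $i$ the top entry $a_{i}$ is routed to the top output slot and the bottom entry $b_{i}$ to the bottom output slot (this is the "aligned" routing underlying the ternary formula of Example~\ref{ck-ex-m3}). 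Consequently $\mathbf{\mu}^{\prime(n)}[\mathbf{E},\ldots,\mathbf{S}_{(i)},\ldots,\mathbf{E}]=\left(\begin{array}[c]{c} a \\ b \end{array}\right)=\mathbf{S}$ for every $i$, so $\mathbf{E}$ is a full $n$-ary identity. Any other (non-Post-like) intactless quiver would swap the roles of $a$ and $b$ at some position $i_{0}$, giving $\left(\begin{array}[c]{c} b \\ a \end{array}\right)\neq\mathbf{S}$ there, hence no identity of the form $\mathbf{E}=\left(\begin{array}[c]{c} e \\ e \end{array}\right)$ (and no other candidate works either, because $\mathbf{E}$ is the only double compatible with the $m$-ary identity in each coordinate).

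Next I would rule out a full identity whenever $\ell_{\operatorname{id}}=1$. In that case the output double of $\mathbf{\mu}^{\prime(n)}$ has one component that is a bare intact entry $a_{k}$ (copied from some distinguished slot of the quiver) and the other component a genuine $\mu^{(m)}$. Fix a position $i$ of $\mathbf{S}$ and set all other doubles to $\mathbf{E}$. If $i$ does \emph{not} coincide with the intact slot of the quiver, then the intact output entry is $e$, while the corresponding component of $\mathbf{S}$ is arbitrary, so $\mathbf{E}$ fails to act as identity at position $i$. Only when $i$ matches the intact slot do both output entries survive (one by being intact, the other by collapsing $\mu^{(m)}[e,\ldots,e,x,e,\ldots,e]=x$ via (\ref{ck-e})), yielding $\mathbf{S}$. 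This yields identity-like behaviour at exactly one place, i.e.\ a \emph{one-sided} identity at best.

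For the binary specialization $n=2$, the arity-quantization (\ref{ck-m32}) forces $m=3$, and Example~\ref{ck-ex-m2} already exhibits both associative quivers explicitly: in each one, exactly one of $\mathbf{\mu}^{\prime(2)}[\mathbf{E},\mathbf{S}]$ and $\mathbf{\mu}^{\prime(2)}[\mathbf{S},\mathbf{E}]$ reduces to $\mathbf{S}$, so the identity is strictly one-sided, confirming the last sentence of the statement. The main obstacle is formalising what "Post-like" means for general $m$ in the intactless case; once pinned down as the aligned (non-swapping) routing of each double's top/bottom entries to the corresponding output slots, the rest is a direct bookkeeping argument using only (\ref{ck-mmnn}) and (\ref{ck-e}).
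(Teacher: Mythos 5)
The paper offers no formal proof of this proposition: it is asserted on the strength of the two preceding computations, \textit{Example} \ref{ck-ex-m2} (the binary power $n=2$ of a ternary semigroup, where $\mathbf{E}$ is only a one-sided identity) and \textit{Example} \ref{ck-ex-m3} (the intactless Post-like ternary square, where $\mathbf{E}$ is claimed to be a full identity). Your overall plan --- substitute $\mathbf{E}$ into all but one slot of (\ref{ck-mmnn}) and track where each scalar entry lands --- is the right general strategy, and your treatment of the case $\ell_{\operatorname{id}}=1$ is sound: the intact output component is a bare copied entry, so it can reproduce the non-$\mathbf{E}$ double only when that double occupies the intact slot, whence at most a one-sided identity; for $n=2$ this reduces exactly to the computation in \textit{Example} \ref{ck-ex-m2}.

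The gap is in the intactless case: you characterize the Post-like quiver as the ``aligned'' routing in which every $a_{i}$ goes to the top output and every $b_{i}$ to the bottom output. That routing is not the Post-like hetero product --- it is precisely the \textsf{componentwise} direct product (\ref{ck-mg1}), which the paper explicitly separates from the hetero (``entangled'') construction; if Post-like meant aligned, the proposition would be about the wrong object. The actual Post-like quiver alternates the entries, as in (\ref{ck-ncom}) and (\ref{ck-m4}): top $=\mu^{\left(  m\right)  }\left[  a_{1},b_{2},a_{3},\ldots\right]$, bottom $=\mu^{\left(  m\right)  }\left[  b_{1},a_{2},b_{3},\ldots\right]$. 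Rerunning your bookkeeping on the correct formula gives, for $m=3$, $\mathbf{\mu}^{\prime\left(  3\right)  }\left[  \mathbf{E},\mathbf{S},\mathbf{E}\right]  =\left(
\begin{array}
[c]{c}
b\\
a
\end{array}
\right)$, i.e.\ the swapped double whenever $\mathbf{S}$ sits in an even position, so your argument as written does not establish that $\mathbf{E}$ satisfies (\ref{ck-e}) (which requires neutrality with the element on \emph{any} place) for the genuine Post-like product; it yields identity behaviour only at the odd slots. This tension is already latent in \textit{Example} \ref{ck-ex-m3}, which asserts all three placements work; a proof cannot simply inherit that assertion but must confront the even-position computation, either by an argument specific to the alternating quiver or by weakening what ``contains an identity'' is taken to mean.
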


Next we consider more complicated hetero power (\textquotedblleft
entangled\textquotedblright) constructions with and without intact elements,
which is possible for $m\geq4$ only \cite{dup2018a}.

\begin{example}
Let $\mathcal{S}^{\left(  5\right)  }=\left\langle S\mid\mu^{\left(  5\right)
}\right\rangle $ be a $5$-ary semigroup, then we construct its $5$-ary totally
associative hetero square $\mathcal{G}^{\prime\left(  5\right)  }=\left\langle
G^{\prime}\mid\mathbf{\mu}^{\prime\left(  5\right)  }\right\rangle $ using the
Post-like associative quiver without intact elements. We define the $5$-ary
multiplication of the doubles by%
\begin{equation}
\mathbf{\mu}^{\prime\left(  5\right)  }\left[  \mathbf{S}_{1},\mathbf{S}%
_{2},\mathbf{S}_{3},\mathbf{S}_{4},\mathbf{S}_{5}\right]  =\left(
\begin{array}
[c]{c}%
\mu^{\left(  5\right)  }\left[  a_{1},b_{2},a_{3},b_{4},a_{5}\right] \\[5pt]%
\mu^{\left(  5\right)  }\left[  b_{1},a_{2},b_{3},a_{4},b_{5}\right]
\end{array}
\right)  ,\ \ \ a_{i},b_{i}\in S. \label{ck-m4}%
\end{equation}
It can be shown that $\mathbf{\mu}^{\prime\left(  5\right)  }$ is totally
associative, and therefore $\mathcal{S}^{\prime\left(  5\right)
}=\left\langle S^{\prime}\mid\mathbf{\mu}^{\prime\left(  5\right)
}\right\rangle $ is a $5$-ary \textsf{commutative} semigroup. If
$\mathcal{S}^{\left(  5\right)  }$ has the $5$-ary identity $e$ satisfying%
\begin{equation}
\mu^{\left(  5\right)  }\left[  e,e,e,e,a\right]  =a,\ \ \ \forall a\in
S,\ \ e\in S, \label{ck-m4e}%
\end{equation}
then the hetero power $\mathcal{S}^{\prime\left(  5\right)  }$ has the $5$-ary
identity%
\begin{equation}
\mathbf{E}=\left(
\begin{array}
[c]{c}%
e\\
e
\end{array}
\right)  ,\ \ \ e\in S. \label{ck-ee}%
\end{equation}

\end{example}

A more nontrivial example is a hetero product (power) which has a different
arity than that of the initial semigroup.

\begin{example}
Let $\mathcal{S}^{\left(  5\right)  }=\left\langle S\mid\mu^{\left(  5\right)
}\right\rangle $ be a commutative $5$-ary semigroup, then we can construct its
ternary associative hetero power $\mathcal{S}^{\prime\left(  3\right)
}=\left\langle S^{\prime}\mid\mathbf{\mu}^{\prime\left(  3\right)
}\right\rangle $ using the associative quivers with one intact element (see
(\ref{ck-m32}) for allowed \textquotedblleft quantized\textquotedblright%
\ arities). We propose for the triples $\mathbf{S}_{i}$ the following ternary
multiplication%
\begin{equation}
\mathbf{\mu}^{\prime\left(  3\right)  }\left[  \mathbf{S}_{1},\mathbf{S}%
_{2},\mathbf{S}_{3}\right]  =\mathbf{\mu}^{\prime\left(  3\right)  }\left[
\left(
\begin{array}
[c]{c}%
a_{1}\\
b_{1}%
\end{array}
\right)  ,\left(
\begin{array}
[c]{c}%
a_{2}\\
b_{2}%
\end{array}
\right)  ,\left(
\begin{array}
[c]{c}%
a_{3}\\
b_{3}%
\end{array}
\right)  \right]  =\left(
\begin{array}
[c]{c}%
\mu^{\left(  5\right)  }\left[  a_{1},b_{2},a_{3},b_{1},a_{2}\right] \\[5pt]%
b_{3}%
\end{array}
\right)  ,\ \ \ a_{i},b_{i}\in S. \label{ck-m3}%
\end{equation}
It can be seen that $\mathbf{\mu}^{\prime\left(  3\right)  }$ is totally
associative, and therefore the hetero power of $5$-ary semigroup
$\mathcal{S}^{\left(  5\right)  }=\left\langle S\mid\mu^{\left(  5\right)
}\right\rangle $ is a \textsf{noncommutative} ternary semigroup $\mathcal{S}%
^{\prime\left(  3\right)  }=\left\langle S^{\prime}\mid\mathbf{\mu}%
^{\prime\left(  3\right)  }\right\rangle $, such that $\mathcal{S}%
^{\prime\left(  3\right)  }=\mathcal{S}^{\left(  5\right)  }\boxtimes
\mathcal{S}^{\left(  5\right)  }$. If the initial $5$-ary semigroup
$\mathcal{S}^{\left(  5\right)  }$ has the identity satisfying%
\begin{equation}
\mu^{\left(  5\right)  }\left[  e,e,e,e,a\right]  =a,\ \ \ \forall a\in
S,\ \ e\in S,
\end{equation}
then the ternary hetero power $\mathcal{S}^{\prime\left(  3\right)  }$ has
\textsf{only} the left ternary identity (\ref{ck-ee}) satisfying one relation%
\begin{equation}
\mathbf{\mu}^{\prime\left(  3\right)  }\left[  \mathbf{E},\mathbf{E}%
,\mathbf{S}\right]  =\mathbf{S},\ \ \ \forall\mathbf{S}\in S\times S,
\end{equation}
and therefore $\mathcal{S}^{\prime\left(  3\right)  }$ is a ternary semigroup
with the left identity.
\end{example}

\subsection{Equivalence relations for $m$-ary semigroups}

Consider of the extension of the equivalence relations (\ref{ck-m2a}) and
(\ref{ck-m2b}) for $m$-ary semigroups. On the polyadic direct product (square)
$\mathcal{S}^{\left(  m\right)  }\times\mathcal{S}^{\left(  m\right)  }$ we
define two kinds of corresponding binary relations.

\begin{definition}
The first relation $\left(  \sim_{gauge}\right)  $ is described by the
\textit{polyadic \textquotedblleft gauge\textquotedblright\ shifts} of the
doubles $\mathbf{S=}\left(
\begin{array}
[c]{c}%
a\\
b
\end{array}
\right)  \in S\times S$ in the form%
\begin{equation}
\left(
\begin{array}
[c]{c}%
a_{1}\\
b_{1}%
\end{array}
\right)  \sim_{gauge}\left(
\begin{array}
[c]{c}%
a_{2}\\
b_{2}%
\end{array}
\right)  \Longleftrightarrow\exists_{x,y\in S}\ \left(
\begin{array}
[c]{c}%
\mu^{\left(  m\right)  }\left[  a_{1}^{m-1},x\right] \\[5pt]%
\mu^{\left(  m\right)  }\left[  b_{1}^{m-1},x\right]
\end{array}
\right)  =\left(
\begin{array}
[c]{c}%
\mu^{\left(  m\right)  }\left[  a_{2}^{m-1},y\right] \\[5pt]%
\mu^{\left(  m\right)  }\left[  b_{2}^{m-1},y\right]
\end{array}
\right)  ,\ \ \forall a_{i},b_{i}\in S. \label{ck-eq1}%
\end{equation}

The second relation $\left(  \sim_{2}\right)  $\ is given by the
\textit{polyadic \textquotedblleft twisted\textquotedblright} shift%
\begin{equation}
\left(
\begin{array}
[c]{c}%
a_{1}\\
b_{1}%
\end{array}
\right)  \sim_{twist}\left(
\begin{array}
[c]{c}%
a_{2}\\
b_{2}%
\end{array}
\right)  \Longleftrightarrow\exists_{z\in S}\ \left(  \mu^{\left(  m\right)
}\right)  ^{\circ2}\left[  a_{1}^{m-1},b_{2}^{m-1},z\right]  =\left(
\mu^{\left(  m\right)  }\right)  ^{\circ2}\left[  a_{2}^{m-1},b_{1}%
^{m-1},z\right]  ,\ \ \forall a_{i},b_{i}\in S. \label{ck-eq2}%
\end{equation}

\end{definition}

\begin{proposition}
The relations described by the polyadic \textquotedblleft
gauge\textquotedblright\ shifts and \textquotedblleft
twisted\textquotedblright\ shift coincide $\sim_{gauge}=\sim_{twist}\equiv
\sim_{m}$.
\end{proposition}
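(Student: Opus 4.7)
The plan is to adapt the binary argument of Assertion~\ref{ck-as-eq} to the $m$-ary setting by proving each implication in $\sim_{gauge} \Longleftrightarrow \sim_{twist}$ via an explicit construction: the auxiliary element(s) required by one relation are built from those of the other, and the resulting polyadic equalities are reduced to the hypothesis using total associativity (\ref{ck-ghu}) together with commutativity of the underlying $m$-ary semigroup.

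For the direction $\sim_{twist} \Longrightarrow \sim_{gauge}$, given $z \in S$ witnessing the twist equation (\ref{ck-eq2}), I would set
\[
x = \mu^{(m)}[b_2^{m-1}, z], \qquad y = \mu^{(m)}[b_1^{m-1}, z],
\]
as the polyadic counterpart of the binary choice in (\ref{ck-zxy}). Substituting into the first gauge equation and applying total associativity to absorb the outer and inner $m$-ary multiplications into $(\mu^{(m)})^{\circ 2}$ should recover exactly the twist hypothesis. The second gauge equation should then reduce to an identity of the form $(\mu^{(m)})^{\circ 2}[b_1^{m-1}, b_2^{m-1}, z] = (\mu^{(m)})^{\circ 2}[b_2^{m-1}, b_1^{m-1}, z]$, which follows from commutativity alone.

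For the converse $\sim_{gauge} \Longrightarrow \sim_{twist}$, I would take $z$ to be a simple function of $x$ and $y$; I expect $z = x$ to suffice, since commutativity allows a single shift element to play the role that $\mu[x,y]$ plays in the binary proof. The key step is to use total associativity and commutativity to rearrange $(\mu^{(m)})^{\circ 2}[a_1^{m-1}, b_2^{m-1}, x]$ so that $\mu^{(m)}[a_1^{m-1}, x]$ appears as the inner multiplication, then apply the first gauge equation to replace it by $\mu^{(m)}[a_2^{m-1}, y]$. Reassembling and applying the second gauge equation in reverse (after another swap of blocks by commutativity) should convert the $y$ back into $x$ while exchanging $b_2^{m-1}$ for $b_1^{m-1}$, producing the right-hand side of the twist equation.

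The main obstacle is the bookkeeping of positions inside $(\mu^{(m)})^{\circ 2}$, which has arity $2m-1$: total associativity permits the inner $m$-ary multiplication to sit in any of $m$ slots, while commutativity must be used to permute the equal-element blocks $a_i^{m-1}$ and $b_j^{m-1}$ around the pivot element. The nontrivial part is selecting a sequence of associativity-and-commutativity moves that aligns the two gauge hypotheses with the twist equation simultaneously; once the substitution is written out explicitly, the verification proceeds term-by-term in direct parallel to the binary case and requires no further ideas.
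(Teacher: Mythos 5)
Your proposal is correct and follows the same overall strategy as the paper's proof: each implication is established by an explicit choice of witnesses, which is then reduced to the hypothesis via total associativity and commutativity of $\mathcal{S}^{\left(m\right)}$. Two of your specific witness choices differ from the paper's, and both differences are to your advantage. For $\sim_{twist}\Rightarrow\sim_{gauge}$ the paper takes $x=\mu^{\left(m\right)}\left[b_{2}^{m-1},z\right]$ but $y=\mu^{\left(m\right)}\left[a_{1}^{m-1},z\right]$; with that $y$ the first component of (\ref{ck-eq1}) would require $\left(\mu^{\left(m\right)}\right)^{\circ2}\left[a_{1}^{m-1},b_{2}^{m-1},z\right]=\left(\mu^{\left(m\right)}\right)^{\circ2}\left[a_{2}^{m-1},a_{1}^{m-1},z\right]$, which is not the twist hypothesis. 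Your $y=\mu^{\left(m\right)}\left[b_{1}^{m-1},z\right]$ is the choice that actually works (the first component then reduces to (\ref{ck-eq2}) and the second to commutativity alone), so you have in effect repaired a slip that is already present in the binary Assertion \ref{ck-as-eq}, where $y=\mu\left[a_{1},z\right]$ should likewise read $y=\mu\left[b_{1},z\right]$. For $\sim_{gauge}\Rightarrow\sim_{twist}$ the paper pads with $m-2$ arbitrary elements and sets $z=\mu^{\left(m\right)}\left[x,y,t_{1},\ldots,t_{m-2}\right]$, substituting both gauge equations at once; your $z=x$ also suffices, via the chain $\left(\mu^{\left(m\right)}\right)^{\circ2}\left[a_{1}^{m-1},b_{2}^{m-1},x\right]=\mu^{\left(m\right)}\left[\mu^{\left(m\right)}\left[a_{1}^{m-1},x\right],b_{2}^{m-1}\right]=\mu^{\left(m\right)}\left[\mu^{\left(m\right)}\left[a_{2}^{m-1},y\right],b_{2}^{m-1}\right]=\mu^{\left(m\right)}\left[a_{2}^{m-1},\mu^{\left(m\right)}\left[b_{2}^{m-1},y\right]\right]=\mu^{\left(m\right)}\left[a_{2}^{m-1},\mu^{\left(m\right)}\left[b_{1}^{m-1},x\right]\right]=\left(\mu^{\left(m\right)}\right)^{\circ2}\left[a_{2}^{m-1},b_{1}^{m-1},x\right]$, and it is slightly cleaner because it avoids the auxiliary elements $t_{i}$. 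Both arguments rely on commutativity to permute blocks inside iterated products; this hypothesis is only implicit in the paper's statement, and you are right to invoke it explicitly.
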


\begin{proof}
$\sim_{twist}\Rightarrow\sim_{gauge}$) Let (\ref{ck-eq2}) hold, then putting
$x=\mu^{\left(  m\right)  }\left[  b_{2}^{m-1},z\right]  $, $y=\mu^{\left(
m\right)  }\left[  a_{1}^{m-1},z\right]  $, we get (\ref{ck-eq1}).

$\sim_{gauge}\Rightarrow\sim_{twist}$) Conversely, if (\ref{ck-eq1}) takes
place, then we can always find $t_{1},\ldots,t_{m-2}\in S$, such that
$z=\mu^{\left(  m\right)  }\left[  x,y,t_{1},\ldots t_{m-2}\right]  $ and%
\begin{align}
&  \left(  \mu^{\left(  m\right)  }\right)  ^{\circ2}\left[  a_{1}^{m-1}%
,b_{2}^{m-1},z\right]  =\mu^{\left(  m\right)  }\left[  \mu^{\left(  m\right)
}\left[  a_{1}^{m-1},x\right]  ,\mu^{\left(  m\right)  }\left[  b_{2}%
^{m-1},y\right]  ,t_{1},\ldots t_{m-2}\right] \nonumber\\
&  =\mu^{\left(  m\right)  }\left[  \mu^{\left(  m\right)  }\left[
a_{2}^{m-1},x\right]  ,\mu^{\left(  m\right)  }\left[  b_{1}^{m-1},y\right]
,t_{1},\ldots t_{m-2}\right]  =\left(  \mu^{\left(  m\right)  }\right)
^{\circ2}\left[  a_{2}^{m-1},b_{1}^{m-1},z\right]  .
\end{align}

\end{proof}

\begin{proposition}
\label{ck-prop-bineq}The relation $\sim_{m}$ is the equivalence relation on
the set of doubles $\left\{  \mathbf{S}\right\}  $, $\mathbf{S}\in S\times S$.
\end{proposition}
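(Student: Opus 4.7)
The plan is to check reflexivity, symmetry, and transitivity for $\sim_m$ directly from the \textquotedblleft gauge\textquotedblright\ form (\ref{ck-eq1}), or equivalently from the \textquotedblleft twisted\textquotedblright\ form (\ref{ck-eq2}), whichever is more convenient at each step, using the previously established coincidence $\sim_{gauge}=\sim_{twist}$.

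Reflexivity and symmetry are essentially formal. For reflexivity, given $\mathbf{S}=\left(\begin{array}{c} a \\ b \end{array}\right)$, one picks $x=y$ to be any element of $S$, and both components of the two sides of (\ref{ck-eq1}) agree termwise. Symmetry is immediate, since (\ref{ck-eq1}) is symmetric under swapping the triple $(a_1,b_1;x)$ with $(a_2,b_2;y)$, so $\mathbf{S}_1\sim_m\mathbf{S}_2$ implies $\mathbf{S}_2\sim_m\mathbf{S}_1$.

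Transitivity is the substantive step. Assume $\mathbf{S}_1\sim_m\mathbf{S}_2$ with twist parameter $z_{12}$ and $\mathbf{S}_2\sim_m\mathbf{S}_3$ with twist parameter $z_{23}$, both in the form (\ref{ck-eq2}). I would look for a witness $z_{13}$ built as an iterated $\mu^{(m)}$-product of $z_{12}$, $z_{23}$ and the middle double's blocks $a_2^{m-1}$, $b_2^{m-1}$, arranged so that when one expands the polyad $a_1^{m-1},b_3^{m-1},z_{13}$ under a long enough iterated composition, the sub-polyad $a_1^{m-1},b_2^{m-1},z_{12}$ appears contiguously; one substitutes it by $a_2^{m-1},b_1^{m-1},z_{12}$ using the first equivalence, and similarly converts $a_2^{m-1},b_3^{m-1},z_{23}$ into $a_3^{m-1},b_2^{m-1},z_{23}$ using the second, ultimately matching the polyad $a_3^{m-1},b_1^{m-1},z_{13}$. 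This directly mimics the classical Grothendieck argument for $m=2$, where the choice $z_{13}=z_{12}\cdot b_2\cdot z_{23}$ makes the substitutions line up.

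The main obstacle is exactly that of organizing the polyads so that the required sub-polyads sit in a single $m$-block: this requires total associativity (\ref{ck-ghu}) to regroup the iterated composition freely, and a form of commutativity or semicommutativity (\ref{ck-mth}) of $\mu^{(m)}$ to permute the blocks $a_i^{m-1}$ and $b_j^{m-1}$ past each other. In the semicommutative or commutative setting (which is the setting in which the forthcoming Grothendieck-type construction is going to be applied, compare with the binary proof in the excerpt), the needed swaps are available and the argument goes through; if the polyad lengths produced do not fit a single iterated composition $(\mu^{(m)})^{\circ\ell}$, one pads with a neutral polyad (\ref{ck-mng}) to match the arity. Combining this transitivity argument with the trivial reflexivity and symmetry yields the proposition.
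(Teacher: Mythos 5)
Your proposal follows essentially the same route as the paper: reflexivity and symmetry are read off formally from (\ref{ck-eq1})--(\ref{ck-eq2}), and transitivity is obtained by composing the two witnessing equations through the middle double's blocks $a_2^{m-1}$, $b_2^{m-1}$, padding to the correct polyad length and regrouping by total associativity to extract the new witness --- exactly the paper's $x_3$, $y_3$ and $z_3=\left(\mu^{(m)}\right)^{\circ 3}\left[a_2^{m-1},b_2^{m-1},z_1,z_2,t_1,\ldots,t_{m-2}\right]$. The only cosmetic difference is the padding device: the paper appends the same arbitrary $(m-2)$-tuple $t_1,\ldots,t_{m-2}$ to both sides of the combined equation, which is always available in a bare $m$-ary semigroup, whereas your neutral-polyad padding would additionally require neutral polyads to exist.
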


\begin{proof}
\textbf{1}) \textsf{Reflexivity} ($1=2$) of (\ref{ck-eq1}) is obvious, and for
$\sim_{twist}$ it follows from (\ref{ck-eq2}), becoming the identity $\left(
\mu^{\left(  m\right)  }\right)  ^{\circ2}\left[  a^{m-1},b^{m-1},z\right]
=\left(  \mu^{\left(  m\right)  }\right)  ^{\circ2}\left[  a^{m-1}%
,b^{m-1},z\right]  $.

\textbf{2}) \textsf{Symmetry} ($1\longleftrightarrow2$) is evident for
$\sim_{gauge}$, while (\ref{ck-eq2}) is symmetric with respect to
($1\longleftrightarrow2$).

\textbf{3}) \textsf{Transitivity}. Let $\mathbf{S}_{1}\sim_{m}\mathbf{S}_{2}$
and $\mathbf{S}_{2}\sim_{m}\mathbf{S}_{3}$, and we will prove that
$\mathbf{S}_{1}\sim_{m}\mathbf{S}_{3}$, $\mathbf{S}_{i}\in S\times S$.

We start from the \textquotedblleft gauge\textquotedblright-like relations
$\mathbf{S}_{1}\sim_{gauge}\mathbf{S}_{2}$ and $\mathbf{S}_{2}\sim
_{gauge}\mathbf{S}_{3}$ for the first components of the doubles%
\begin{align}
\mu^{\left(  m\right)  }\left[  a_{1}^{m-1},x_{1}\right]   &  =\mu^{\left(
m\right)  }\left[  a_{2}^{m-1},y_{1}\right]  ,\label{ck-mmm1}\\
\mu^{\left(  m\right)  }\left[  a_{2}^{m-1},x_{2}\right]   &  =\mu^{\left(
m\right)  }\left[  a_{3}^{m-1},y_{2}\right]  ,\ \ \ \exists x_{i},y_{i}\in S.
\label{ck-mmm2}%
\end{align}

Then we multiply separately the left hand sides and right hand sides of
(\ref{ck-mmm1}), (\ref{ck-mmm2}) together with $\left(  m-2\right)  $
identities $t_{1}=t_{1}$, $t_{2}=t_{2},\ldots,t_{m-2}=t_{m-2}$, $t_{1}%
,\ldots,t_{m-2}\in S$, and derive%
\begin{equation}
\left(  \mu^{\left(  m\right)  }\right)  ^{\circ3}\left[  a_{1}^{m-1}%
,x_{1},a_{2}^{m-1},x_{2},t_{1},\ldots,t_{m-2}\right]  =\left(  \mu^{\left(
m\right)  }\right)  ^{\circ3}\left[  a_{2}^{m-1},y_{1},a_{3}^{m-1},y_{2}%
,t_{1},\ldots,t_{m-2}\right]  .
\end{equation}

Denoting $x_{3}=\left(  \mu^{\left(  m\right)  }\right)  ^{\circ2}\left[
a_{2}^{m-1},x_{1},x_{2},t_{1},\ldots,t_{m-2}\right]  $ and $y_{3}=\left(
\mu^{\left(  m\right)  }\right)  ^{\circ2}\left[  a_{2}^{m-1},y_{1}%
,y_{2},t_{1},\ldots,t_{m-2}\right]  $, we obtain in the form%
\begin{equation}
\mu^{\left(  m\right)  }\left[  a_{1}^{m-1},x_{3}\right]  =\mu^{\left(
m\right)  }\left[  a_{3}^{m-1},y_{3}\right]  .
\end{equation}
The second components of the doubles can be treated similarly%
\begin{equation}
\mu^{\left(  m\right)  }\left[  b_{1}^{m-1},x_{3}\right]  =\mu^{\left(
m\right)  }\left[  b_{3}^{m-1},y_{3}\right]  ,
\end{equation}
and so we have $\mathbf{S}_{1}\sim_{gauge}\mathbf{S}_{3}$.

The \textquotedblleft twisted\textquotedblright-like relations $\mathbf{S}%
_{1}\sim_{twist}\mathbf{S}_{2}$ and $\mathbf{S}_{2}\sim_{twist}\mathbf{S}_{3}$
have the form%
\begin{align}
\left(  \mu^{\left(  m\right)  }\right)  ^{\circ2}\left[  a_{1}^{m-1}%
,b_{2}^{m-1},z_{1}\right]   &  =\left(  \mu^{\left(  m\right)  }\right)
^{\circ2}\left[  a_{2}^{m-1},b_{1}^{m-1},z_{1}\right]  ,\label{ck-mm1}\\
\left(  \mu^{\left(  m\right)  }\right)  ^{\circ2}\left[  a_{2}^{m-1}%
,b_{3}^{m-1},z_{2}\right]   &  =\left(  \mu^{\left(  m\right)  }\right)
^{\circ2}\left[  a_{3}^{m-1},b_{2}^{m-1},z_{2}\right]  . \label{ck-mm2}%
\end{align}
We multiply separately the left hand sides and right hand sides of
(\ref{ck-mm1}), (\ref{ck-mm2}) together with $\left(  m-2\right)  $ identities
$t_{1}=t_{1}$, $t_{2}=t_{2},\ldots,t_{m-2}=t_{m-2}$, $t_{1},\ldots,t_{m-2}\in
S$, to get%
\begin{align}
&  \left(  \mu^{\left(  m\right)  }\right)  ^{\circ5}\left[  a_{1}^{m-1}%
,b_{2}^{m-1},z_{1},a_{2}^{m-1},b_{3}^{m-1},z_{2},t_{1},\ldots,t_{m-2}\right]
\nonumber\\
&  =\left(  \mu^{\left(  m\right)  }\right)  ^{\circ5}\left[  a_{2}%
^{m-1},b_{1}^{m-1},z_{1},a_{3}^{m-1},b_{2}^{m-1},z_{2},t_{1},\ldots
,t_{m-2}\right]  ,
\end{align}
which can be written as%
\begin{equation}
\left(  \mu^{\left(  m\right)  }\right)  ^{\circ2}\left[  a_{1}^{m-1}%
,b_{3}^{m-1},z_{3}\right]  =\left(  \mu^{\left(  m\right)  }\right)  ^{\circ
2}\left[  a_{3}^{m-1},b_{1}^{m-1},z_{3}\right]  , \label{ck-mm3}%
\end{equation}
where $z_{3}=\left(  \mu^{\left(  m\right)  }\right)  ^{\circ3}\left[
a_{2}^{m-1},b_{2}^{m-1},z_{1},z_{2},t_{1},\ldots,t_{m-2}\right]  $, $z_{3}\in
S.$ Therefore, it follows from (\ref{ck-mm3}), that $\mathbf{S}_{1}%
\sim_{twist}\mathbf{S}_{3}$.
\end{proof}

We next introduce the corresponding equivalence classes.

\begin{definition}
The \textit{equivalence class} of doubles corresponding to the equivalence
relation $\sim_{m}$ on the $m$-ary semigroup $\mathcal{S}^{\left(  m\right)
}$ is the factor%
\begin{equation}
\widetilde{\mathfrak{S}}=\left[
\begin{array}
[c]{c}%
a\\
b
\end{array}
\right]  _{m}=\left\{  \mathbf{S}\right\}  /\sim_{m},\ \ \ \ \ \mathbf{S}%
=\left(
\begin{array}
[c]{c}%
a\\
b
\end{array}
\right)  \in S\times S, \label{ck-sbin}%
\end{equation}
where $\sim_{m}$ is defined in (\ref{ck-eq1}) or (\ref{ck-eq2}).
\end{definition}

Next we consider the structure of the equivalence classes (\ref{ck-sbin}) in
some examples.

\begin{example}
[\textsf{Negative numbers}]\label{ck-ex-neg}Let $S=\mathbb{N}_{-}%
\subset\mathbb{Z}$ be the set of negative (integer) numbers (without zero),
which do not form a binary semigroup, obviously. However, we can introduce the
ternary multiplication $\mu^{\left(  3\right)  }\left[  a,b,c\right]  =abc$
(ordinary product in $\mathbb{Z}$), such that $\mathcal{S}_{neg}^{\left(
3\right)  }=\left\langle \mathbb{N}_{-}\mid\mu^{\left(  3\right)
}\right\rangle $ becomes the ternary semigroup (of negative numbers). The
doubles have the form%
\begin{equation}
\mathbf{S}_{neg}=\left(
\begin{array}
[c]{c}%
-p\\
-q
\end{array}
\right)  \in\mathbb{N}_{-}\times\mathbb{N}_{-},\ \ \ p,q\in\mathbb{N}.
\label{ck-sneg}%
\end{equation}
The equivalence relations (\ref{ck-eq1}) and (\ref{ck-eq2}) become ($m=3$)%
\begin{equation}
\left(
\begin{array}
[c]{c}%
-p_{1}\\
-q_{1}%
\end{array}
\right)  \sim_{gauge}\left(
\begin{array}
[c]{c}%
-p_{2}\\
-q_{2}%
\end{array}
\right)  \Longleftrightarrow\exists_{x,y\in\mathbb{N}}\ \left(
\begin{array}
[c]{c}%
-p_{1}^{2}x\\
-q_{1}^{2}x
\end{array}
\right)  =\left(
\begin{array}
[c]{c}%
-p_{2}^{2}y\\
-q_{2}^{2}y
\end{array}
\right)  ,\ \ \ p_{i},q_{i}\in\mathbb{N},
\end{equation}
and%
\begin{equation}
\left(
\begin{array}
[c]{c}%
-p_{1}\\
-q_{1}%
\end{array}
\right)  \sim_{twist}\left(
\begin{array}
[c]{c}%
-p_{2}\\
-q_{2}%
\end{array}
\right)  \Longleftrightarrow\exists_{x,y\in\mathbb{N}}\ p_{1}^{2}q_{2}%
^{2}z=p_{2}^{2}q_{1}^{2}z. \label{ck-pqz}%
\end{equation}

For instance, we compute%
\begin{align}
\left(
\begin{array}
[c]{c}%
-1\\
-2
\end{array}
\right)   &  \sim\left(
\begin{array}
[c]{c}%
-2\\
-4
\end{array}
\right)  \sim\left(
\begin{array}
[c]{c}%
-3\\
-6
\end{array}
\right)  \ldots,\\
\left(
\begin{array}
[c]{c}%
-4\\
-3
\end{array}
\right)   &  \sim\left(
\begin{array}
[c]{c}%
-8\\
-6
\end{array}
\right)  \sim\left(
\begin{array}
[c]{c}%
-12\\
-9
\end{array}
\right)  \ldots.
\end{align}

In general, the two elements
\begin{equation}
\left(
\begin{array}
[c]{c}%
-p\\
-q
\end{array}
\right)  ,\left(
\begin{array}
[c]{c}%
-kp\\
-kq
\end{array}
\right)  \label{ck-kp}%
\end{equation}
are in the same class, $k\in\mathbb{N}$. Each equivalence class $\widetilde
{\mathfrak{S}}_{neg}$ contains the maximal representative $\left[
\begin{array}
[c]{c}%
-p_{\min}\\
-q_{\min}%
\end{array}
\right]  _{3}$ corresponding to mininal $p=p_{\min}$ and $q=q_{\min}$,
together with other elements (\ref{ck-kp}) of the class. Thus, for the ternary
semigroup of negative integers the structure of the binary equivalence classes
in $\mathcal{S}_{neg}^{\left(  3\right)  }\times\mathcal{S}_{neg}^{\left(
3\right)  }\diagup\sim_{3}$ is given by the following maximal representatives%
\begin{align}
\widetilde{\mathfrak{S}}_{neg,p>q}  &  =\left\{  \left[
\begin{array}
[c]{c}%
-p_{\min}\\
-q_{\min}%
\end{array}
\right]  _{3}\right\}  ,\ \ \ \ p_{\min}>q_{\min},\\
\widetilde{\mathfrak{S}}_{neg,p<q}  &  =\left\{  \left[
\begin{array}
[c]{c}%
-p_{\min}\\
-q_{\min}%
\end{array}
\right]  _{3}\right\}  ,\ \ \ \ p_{\min}<q_{\min},\ \ \ \ p_{\min},q_{\min}%
\in\mathbb{N},
\end{align}
where $p_{\min}\neq q_{\min}$, and they are mutually prime $\gcd\left(
p_{\min},q_{\min}\right)  =1$.
\end{example}

\subsection{Polyadic group completion}

In general, the procedure of group completion consists of two steps:

\begin{enumerate}
\item Endow the set of equivalence classes $\left\{  \widetilde{\mathfrak{S}%
}\right\}  $ with the multiplication $\widetilde{\mathbf{\mu}}_{S}^{\left(
\tilde{n}\right)  }$ (in the binary case (\ref{ck-mw})).

\item Find (a polyadic analog of) an inverse and construct a group (as in
(\ref{ck-g1})).
\end{enumerate}

It is commonly accepted that the multiplication of classes inherits the
product of representatives. So the informal formula of class multiplication
coincides with the hetero (\textquotedblleft entangled\textquotedblright)
power (\ref{ck-mmnn}), indeed%
\begin{equation}
\widetilde{\mathbf{\mu}}_{S}^{\left(  \tilde{n}\right)  }\left[  \left[
\begin{array}
[c]{c}%
a_{1}\\
a_{2}%
\end{array}
\right]  _{\tilde{n}},\ldots,\left[
\begin{array}
[c]{c}%
a_{2n-1}\\
a_{2n}%
\end{array}
\right]  _{\tilde{n}}\right]  =\left\{
\genfrac{}{}{0pt}{0}{\left[
\begin{array}
[c]{c}%
\mu^{\left(  m\right)  }\left[  a_{1},\ldots,a_{m}\right]  ,\\
\mu^{\left(  m\right)  }\left[  a_{m+1},\ldots,a_{2m}\right]
\end{array}
\right]  _{m},\ \ \ \ell_{\operatorname*{id}}=0,\ \ \tilde{n}=m,}{\left[
\begin{array}
[c]{c}%
\mu^{\left(  m\right)  }\left[  a_{1},\ldots,a_{m}\right]  ,\\
a_{m+1}%
\end{array}
\right]  _{m},\ \ \ \ell_{\operatorname*{id}}=1,\ \ \tilde{n}=\frac{m+1}{2},}%
\right.  , \label{ck-msn}%
\end{equation}
where $\ell_{\operatorname*{id}}=0,1$ is the number of \textit{intact
elements} in the r.h.s. of (\ref{ck-msn}).

\begin{definition}
The $n$-\textit{ary semigroup of the binary equivalence classes} is defined as%
\begin{equation}
\widetilde{\mathcal{S}}^{\left(  \tilde{n}\right)  }=\left\langle \left\{
\widetilde{\mathfrak{S}}\right\}  \mid\widetilde{\mathbf{\mu}}_{S}^{\left(
\tilde{n}\right)  }\right\rangle .
\end{equation}

\end{definition}

\begin{remark}
\label{ck-rem-nn}In general, for a fixed initial semigroup $\mathcal{S}%
^{\left(  m\right)  }$, the formulas of the doubles multiplication
(\ref{ck-mmnn}) and classes multiplication (\ref{ck-msn}) can be different (as
two choices of (\ref{ck-m2g})), because in one case we multiply concrete
elements, while in another case--their representatives. Moreover, the final
arities $n$ and $\tilde{n}$ need not coincide, if we use different choices in
r.h.s. of (\ref{ck-mmnn}) and (\ref{ck-msn}). For instance, compare the
$5$-ary (\ref{ck-m4}) and ternary (\ref{ck-m3}) multiplications of doubles
obtained from the same $5$-ary semigroup. This could lead to diverse possible
polyadic analogs of the group completions of the same semigroup. Nevertheless,
in what follows we will assume that%
\begin{equation}
\tilde{n}=n \label{ck-nn}%
\end{equation}
and choose the same product for elements and their representatives. Therefore,
all examples of polyadic direct products in \textbf{Subsection
\ref{ck-subsec-dp}} can be applied for the equivalence class multiplication
(\ref{ck-msn}) as well.
\end{remark}

The polyadic analog of the inverse is the querelement (\ref{ck-mgg})
\cite{dor3}, which does not need an identity at all. Therefore, the binary
formulas (\ref{ck-ma0})--(\ref{ck-g1}) do not work for $m\geq3$. So we have

\begin{assertion}
The presence of an identity is not necessary for constructing polyadic groups,
it can be optional and plays no role, and therefore we consider polyadic group
completion not of a polyadic monoid, but of a polyadic semigroup
$\mathcal{S}^{\left(  m\right)  }$.
\end{assertion}

Let us suppose that we can construct the querelement for any double
$\widetilde{\mathfrak{S}}$ in the $n$-ary semigroup $\widetilde{\mathcal{S}%
}^{\left(  n\right)  }$ of equivalence classes, which means that we define the
\textsf{unary} queroperation $\widetilde{\mathbf{\mu}}^{\left(  1\right)  }$
(see (\ref{ck-m1g}))%
\begin{equation}
\overline{\widetilde{\mathfrak{S}}}=\widetilde{\mathbf{\mu}}^{\left(
1\right)  }\left[  \widetilde{\mathfrak{S}}\right]  =\widetilde{\mathbf{\mu}%
}^{\left(  1\right)  }\left[  \left[
\begin{array}
[c]{c}%
a\\
b
\end{array}
\right]  _{n}\right]  =\left[
\begin{array}
[c]{c}%
\bar{a}\\
\bar{b}%
\end{array}
\right]  _{n}, \label{ck-ms0}%
\end{equation}
which satisfies (\ref{ck-mgg})%
\begin{equation}
\widetilde{\mathbf{\mu}}^{\left(  n\right)  }\left[  \widetilde{\mathfrak{S}%
}^{n-1},\overline{\widetilde{\mathfrak{S}}}\right]  =\widetilde{\mathfrak{S}}.
\label{ck-mss}%
\end{equation}

Then the $n$-ary semigroup $\widetilde{\mathcal{S}}^{\left(  n\right)  }$
becomes a polyadic group.

\begin{definition}
The $n$-\textit{ary group completion} of the $m$-ary semigroup $\mathcal{S}%
^{\left(  m\right)  }$ is%
\begin{equation}
\mathsf{K}_{0}^{\left(  m,n\right)  }\left(  \mathcal{S}^{\left(  m\right)
}\right)  =\left\langle \left\{  \widetilde{\mathfrak{S}}\right\}
\mid\widetilde{\mathbf{\mu}}^{\left(  n\right)  },\widetilde{\mathbf{\mu}%
}^{\left(  1\right)  }\right\rangle , \label{ck-kmn}%
\end{equation}
where $\widetilde{\mathbf{\mu}}^{\left(  n\right)  }$ and $\widetilde
{\mathbf{\mu}}^{\left(  1\right)  }$ are the $n$-ary multiplication of the
doubles (\ref{ck-msn}) and the queroperation (\ref{ck-ms0}).
\end{definition}

In this notation the standard binary case (\ref{ck-gs}) is%
\begin{equation}
\mathsf{K}_{0}\left(  \mathcal{S}\right)  =\mathsf{K}_{0}^{\left(  2,2\right)
}\left(  \mathcal{S}^{\left(  2\right)  }\right)  . \label{ck-k02}%
\end{equation}

\begin{remark}
The computation of the group completion $\mathsf{K}_{0}^{\left(  m,n\right)
}$ with abitrary $m$ and $n$ is possible, if we know both multiplications: of
the initial $m$-ary semigroup $\mu^{\left(  m\right)  }$ and of the $n$-ary
group of equivalent classes $\widetilde{\mathbf{\mu}}^{\left(  n\right)  }$
together with its queroperation $\widetilde{\mathbf{\mu}}^{\left(  1\right)
}$.
\end{remark}

\begin{remark}
As opposed to the binary case (\ref{ck-k02}), for a given $m$-ary semigroup
there can exist several associative products of doubles $\mathbf{\mu}%
^{\prime\left(  n\right)  }$ and corresponding products of classes
$\widetilde{\mathbf{\mu}}^{\left(  n\right)  }$ (see, e.g. (\ref{ck-mg1}) and
(\ref{ck-ncom}) or (\ref{ck-m4}) and (\ref{ck-m3})), and therefore the
polyadic group completion $\mathsf{K}_{0}^{\left(  m,n\right)  }$ is
\textsf{not unique}, in general. The number of different $\mathsf{K}%
_{0}^{\left(  m,n\right)  }$ coincides with the number of distinct associative
quivers in the doubles multiplication (\ref{ck-mmnn}) and class products
(\ref{ck-msn}) having the same arity shape \cite{dup2018a}.
\end{remark}

\begin{remark}
If we consider polyadic groups with \textsf{the same} arity shape of the
multiplication of classes $\widetilde{\mathbf{\mu}}^{\left(  n\right)  }$ and
\textsf{the same} queroperation $\widetilde{\mathbf{\mu}}^{\left(  1\right)
}$, then we can have a polyadic analog of universality (\ref{ck-fff}) for this
fixed completion group arity $n$.
\end{remark}

\begin{example}
Let us compute $\mathsf{K}_{0}^{\left(  m,n\right)  }$ for the $n$-ary
componentwise direct power (\ref{ck-mg1}) of the $m$-ary semigroup
$\mathcal{S}^{\left(  m\right)  }=\left\langle S\mid\mu^{\left(  m\right)
}\right\rangle $. Taking to account \textit{Remark} \textbf{\ref{ck-rem-nn}}
and (\ref{ck-nn}) we obtain for the equivalence classes doubles $\widetilde
{\mathfrak{S}}_{i}$ the following $m$-ary multiplication%
\begin{equation}
\widetilde{\mathbf{\mu}}_{compw}^{\left(  m\right)  }\left[  \widetilde
{\mathfrak{S}}_{1},\widetilde{\mathfrak{S}}_{2},\ldots,\widetilde
{\mathfrak{S}}_{m}\right]  =\left(
\begin{array}
[c]{c}%
\mu^{\left(  m\right)  }\left[  a_{1},a_{2},\ldots,a_{m}\right] \\[5pt]%
\mu^{\left(  m\right)  }\left[  b_{1},b_{2},\ldots,b_{m}\right]
\end{array}
\right)  ,\ \ \ \ \widetilde{\mathfrak{S}}_{i}=\left[
\begin{array}
[c]{c}%
a_{i}\\
b_{i}%
\end{array}
\right]  _{m},\ \ a_{i},b_{i}\in S,\ \label{ck-mcomp}%
\end{equation}
because $n=m$ for the componentwise multiplication (\ref{ck-mg1}). We resolve
the equation for the querelement (\ref{ck-mss}) as%
\begin{equation}
\overline{\widetilde{\mathfrak{S}}}_{compw}=\left[
\begin{array}
[c]{c}%
\bar{a}\\
\bar{b}%
\end{array}
\right]  _{m}\sim\left[
\begin{array}
[c]{c}%
\mu^{\left(  m\right)  }\left[  a^{\alpha},b^{m-\alpha}\right] \\
\mu^{\left(  m\right)  }\left[  a^{m-2+\alpha},b^{2-\alpha}\right]
\end{array}
\right]  _{m},
\end{equation}
where $\alpha=0,1,2$. Because all three solutions are in the same equivalence
class, we choose as the representative the symmetric choice $\alpha=1$%
\begin{equation}
\overline{\widetilde{\mathfrak{S}}}_{compw}=\widetilde{\mathbf{\mu}}%
_{compw}^{\left(  1\right)  }\left[  \widetilde{\mathfrak{S}}\right]  =\left[
\begin{array}
[c]{c}%
\bar{a}\\
\bar{b}%
\end{array}
\right]  _{m}=\left[
\begin{array}
[c]{c}%
\mu^{\left(  m\right)  }\left[  a,b^{m-1}\right] \\
\mu^{\left(  m\right)  }\left[  a^{m-1},b\right]
\end{array}
\right]  _{m}. \label{ck-sm1}%
\end{equation}
Therefore, for any $m$-ary semigroup with $m$-ary \textsf{componentwise}
direct power we obtain%
\begin{equation}
\mathsf{K}_{0}^{\left(  m,m\right)  }\left(  \mathcal{S}^{\left(  m\right)
}\right)  =\left\langle \left\{  \widetilde{\mathfrak{S}}\right\}
\mid\widetilde{\mathbf{\mu}}_{compw}^{\left(  m\right)  },\widetilde
{\mathbf{\mu}}_{compw}^{\left(  1\right)  }\right\rangle .
\end{equation}

\end{example}

\begin{example}
[\textsf{Negative numbers (continued)}]Let us introduce for the equivalence
classes $\widetilde{\mathfrak{S}}_{neg}=\left[
\begin{array}
[c]{c}%
-p\\
-q
\end{array}
\right]  _{3}$, $p,q\in\mathbb{N}$, $\gcd\left(  p,q\right)  =1$ (see
\textit{Example} \ref{ck-ex-neg}), their ternary \textsf{componentwise}
multiplication (\ref{ck-mcomp})%
\begin{equation}
\widetilde{\mathbf{\mu}}_{neg}^{\left(  3\right)  }\left[  \left[
\begin{array}
[c]{c}%
-p_{1}\\
-q_{1}%
\end{array}
\right]  _{3}\left[
\begin{array}
[c]{c}%
-p_{2}\\
-q_{2}%
\end{array}
\right]  _{3}\left[
\begin{array}
[c]{c}%
-p_{3}\\
-q_{3}%
\end{array}
\right]  _{3}\right]  =\left[
\begin{array}
[c]{c}%
-p_{1}p_{2}p_{3}\\
-q_{1}q_{2}q_{3}%
\end{array}
\right]  _{3},\ \ \ \ p_{i},q_{i}\in\mathbb{N}. \label{ck-mneg1}%
\end{equation}
We derive from (\ref{ck-mss}) the querelement%
\begin{equation}
\overline{\widetilde{\mathfrak{S}}}_{neg}=\widetilde{\mathbf{\mu}}%
_{neg}^{\left(  1\right)  }\left[  \widetilde{\mathfrak{S}}\right]
=\widetilde{\mathbf{\mu}}_{neg}^{\left(  1\right)  }\left[  \left[
\begin{array}
[c]{c}%
-p\\
-q
\end{array}
\right]  _{3}\right]  =\left[
\begin{array}
[c]{c}%
-pq^{2}\\
-p^{2}q
\end{array}
\right]  _{3},\ \ \ \ \forall p,q\in\mathbb{N},\ \ \gcd\left(  p,q\right)  =1.
\label{ck-qcomp}%
\end{equation}
Thus, we obtain the group completion of the negative numbers (with the
\textsf{componentwise} multiplication of classes $\widetilde{\mathbf{\mu}%
}_{neg}^{\left(  3\right)  }$)%
\begin{equation}
\mathsf{K}_{0}^{\left(  3,3\right)  }\left(  \mathbb{N}_{-}\right)
_{comp}=\left\langle \left\{  \widetilde{\mathfrak{S}}_{neg}\right\}
\mid\widetilde{\mathbf{\mu}}_{neg}^{\left(  3\right)  },\widetilde
{\mathbf{\mu}}_{neg}^{\left(  1\right)  }\right\rangle . \label{ck-kmin}%
\end{equation}
If we choose the ternary \textsf{noncomponentwise} multiplication
(\ref{ck-ncom}), then%
\begin{equation}
\widetilde{\mathbf{\mu}}_{neg2}^{\left(  3\right)  }\left[  \left[
\begin{array}
[c]{c}%
-p_{1}\\
-q_{1}%
\end{array}
\right]  _{3}\left[
\begin{array}
[c]{c}%
-p_{2}\\
-q_{2}%
\end{array}
\right]  _{3}\left[
\begin{array}
[c]{c}%
-p_{3}\\
-q_{3}%
\end{array}
\right]  _{3}\right]  =\left[
\begin{array}
[c]{c}%
-p_{1}q_{2}p_{3}\\
-q_{1}p_{2}q_{3}%
\end{array}
\right]  _{3},\ \ \ \ p_{i},q_{i}\in\mathbb{N}, \label{ck-ncomp}%
\end{equation}
and the querelement will be different from (\ref{ck-qcomp})%
\begin{equation}
\overline{\widetilde{\mathfrak{S}}}_{neg2}=\widetilde{\mathbf{\mu}}%
_{neg2}^{\left(  1\right)  }\left[  \widetilde{\mathfrak{S}}\right]
=\widetilde{\mathbf{\mu}}_{neg2}^{\left(  1\right)  }\left[  \left[
\begin{array}
[c]{c}%
-p\\
-q
\end{array}
\right]  _{3}\right]  =\left[
\begin{array}
[c]{c}%
-pq^{2}\\
-q^{3}%
\end{array}
\right]  _{3},\ \ \ \ \forall p,q\in\mathbb{N},\ \ \gcd\left(  p,q\right)  =1.
\end{equation}
Therefore, the second polyadic group completion of the negative numbers (with
the \textsf{noncomponentwise} multiplication of classes $\widetilde
{\mathbf{\mu}}_{neg2}^{\left(  3\right)  }$ (\ref{ck-ncomp}) being different
from $\widetilde{\mathbf{\mu}}_{neg}^{\left(  3\right)  }$ (\ref{ck-mneg1}))
becomes%
\begin{equation}
\mathsf{K}_{0}^{\left(  3,3\right)  }\left(  \mathbb{N}_{-}\right)
_{noncomp}=\left\langle \left\{  \widetilde{\mathfrak{S}}_{neg}\right\}
\mid\widetilde{\mathbf{\mu}}_{neg2}^{\left(  3\right)  },\widetilde
{\mathbf{\mu}}_{neg2}^{\left(  1\right)  }\right\rangle .
\end{equation}

\end{example}

Another simple example is the set of odd positive numbers.

\begin{example}
\label{ck-ex-nodd}Let $\mathbb{N}_{odd}=\left\{  2k+1\right\}  $,
$k\in\mathbb{N}_{0}$, then it is the ternary semigroup $\mathcal{S}%
_{odd}^{\left(  3\right)  }=\left\langle \mathbb{N}_{odd}\mid\mu
_{odd}^{\left(  3\right)  }\right\rangle $, where the multiplication is the
ordinary addition%
\begin{equation}
\mu_{odd}^{\left(  3\right)  }\left[  a,b,c\right]  =a+b+c,a,b,c\in
\mathbb{N}_{odd}. \label{ck-modd}%
\end{equation}
The doubles (\ref{ck-sab}) have the form%
\begin{equation}
\mathbf{S}_{odd}=\left(
\begin{array}
[c]{c}%
2k_{1}+1\\
2k_{2}+1
\end{array}
\right)  \in\mathbb{N}_{odd}\times\mathbb{N}_{odd},\ \ \ k_{i}\in
\mathbb{N}_{0}.
\end{equation}
The equivalence relations (\ref{ck-eq1}) and (\ref{ck-eq2}) become ($m=3$)%
\begin{align}
\left(
\begin{array}
[c]{c}%
2k_{1}+1\\
2k_{2}+1
\end{array}
\right)   &  \sim_{gauge}\left(
\begin{array}
[c]{c}%
2l_{1}+1\\
2l_{2}+1
\end{array}
\right) \nonumber\\
&  \Leftrightarrow\exists_{x,y\in\mathbb{N}_{odd}}\ \left(
\begin{array}
[c]{c}%
\mu_{odd}^{\left(  3\right)  }\left[  \left(  2k_{1}+1\right)  ^{2},x\right]
\\
\mu_{odd}^{\left(  3\right)  }\left[  \left(  2k_{2}+1\right)  ^{2},x\right]
\end{array}
\right)  =\left(
\begin{array}
[c]{c}%
\mu_{odd}^{\left(  3\right)  }\left[  \left(  2l_{1}+1\right)  ^{2},y\right]
\\
\mu_{odd}^{\left(  3\right)  }\left[  \left(  2l_{2}+1\right)  ^{2},y\right]
\end{array}
\right)  ,\ \ \ k_{i},l_{i}\in\mathbb{N}_{0},
\end{align}
and%
\begin{align}
\left(
\begin{array}
[c]{c}%
2k_{1}+1\\
2k_{2}+1
\end{array}
\right)   &  \sim_{twist}\left(
\begin{array}
[c]{c}%
2l_{1}+1\\
2l_{2}+1
\end{array}
\right) \nonumber\\
&  \Leftrightarrow\exists_{x,y\in\mathbb{N}_{odd}}\ \left(  \mu_{odd}^{\left(
3\right)  }\right)  ^{\circ2}\left[  \left(  2k_{1}+1\right)  ^{2},\left(
2l_{2}+1\right)  ^{2},z\right]  =\left(  \mu_{odd}^{\left(  3\right)
}\right)  ^{\circ2}\left[  \left(  2k_{2}+1\right)  ^{2},\left(
2l_{1}+1\right)  ^{2},z\right]  .
\end{align}

For instance,%
\begin{equation}%
\begin{array}
[c]{cc}%
\begin{array}
[c]{c}%
\left(
\begin{array}
[c]{c}%
3\\
1
\end{array}
\right)  \sim\left(
\begin{array}
[c]{c}%
5\\
3
\end{array}
\right)  \sim\left(
\begin{array}
[c]{c}%
7\\
5
\end{array}
\right)  \ldots,\\[10pt]%
\left(
\begin{array}
[c]{c}%
5\\
1
\end{array}
\right)  \sim\left(
\begin{array}
[c]{c}%
7\\
3
\end{array}
\right)  \sim\left(
\begin{array}
[c]{c}%
9\\
5
\end{array}
\right)  \ldots,\\[10pt]%
\left(
\begin{array}
[c]{c}%
7\\
1
\end{array}
\right)  \sim\left(
\begin{array}
[c]{c}%
9\\
3
\end{array}
\right)  \sim\left(
\begin{array}
[c]{c}%
11\\
5
\end{array}
\right)  \ldots,
\end{array}
&
\begin{array}
[c]{c}%
\left(
\begin{array}
[c]{c}%
1\\
3
\end{array}
\right)  \sim\left(
\begin{array}
[c]{c}%
3\\
5
\end{array}
\right)  \sim\left(
\begin{array}
[c]{c}%
5\\
7
\end{array}
\right)  \ldots,\\[10pt]%
\left(
\begin{array}
[c]{c}%
1\\
5
\end{array}
\right)  \sim\left(
\begin{array}
[c]{c}%
3\\
7
\end{array}
\right)  \sim\left(
\begin{array}
[c]{c}%
5\\
9
\end{array}
\right)  \ldots,\\[10pt]%
\left(
\begin{array}
[c]{c}%
1\\
7
\end{array}
\right)  \sim\left(
\begin{array}
[c]{c}%
3\\
9
\end{array}
\right)  \sim\left(
\begin{array}
[c]{c}%
5\\
11
\end{array}
\right)  \ldots.
\end{array}
\end{array}
\end{equation}

In general, two elements
\begin{equation}
\left(
\begin{array}
[c]{c}%
a\\
b
\end{array}
\right)  ,\left(
\begin{array}
[c]{c}%
a+2\\
b+2
\end{array}
\right)  ,\ \ \ \ a,b\in\mathbb{N}_{odd}, \label{ck-kp1}%
\end{equation}
are in the same equivalence class $\left[
\begin{array}
[c]{c}%
a\\
b
\end{array}
\right]  \in\mathbb{N}_{odd}\times\mathbb{N}_{odd}\diagup\sim$. The minimal
representatives are the doubles%
\begin{equation}
\widetilde{\mathfrak{S}}_{up}=\left[
\begin{array}
[c]{c}%
2k+1\\
1
\end{array}
\right]  ,\ \ \ \widetilde{\mathfrak{S}}_{down}=\left[
\begin{array}
[c]{c}%
1\\
2k+1
\end{array}
\right]  ,\ \ \ \ \ k\in\mathbb{N}. \label{ck-k12}%
\end{equation}
Thus, the structure of the classes is the following%
\begin{equation}
\left\{  \widetilde{\mathfrak{S}}\right\}  =\left\{  \widetilde{\mathfrak{S}%
}_{up}\right\}  \cup\left\{  \widetilde{\mathfrak{S}}_{down}\right\}
,\ \ \ \ \left\{  \widetilde{\mathfrak{S}}_{up}\right\}  \cap\left\{
\widetilde{\mathfrak{S}}_{down}\right\}  =\varnothing.
\end{equation}
We do not have an identity in the ternary semigroup $\mathcal{S}%
_{odd}^{\left(  3\right)  }$, and therefore the representatives (\ref{ck-k12})
cannot be mapped to negative and positive numbers, as in the binary case
(\ref{ck-nmz}), and moreover their multiplication is ternary (\ref{ck-modd}).

To find the polyadic group structure (i.e. querelements (\ref{ck-mgg})), for
the ternary multiplication of the classes $\widetilde{\mathfrak{S}}$ we
consider two cases:

\begin{enumerate}
\item The componentwise multiplication (\ref{ck-mg1}). We use the ternary
multiplication of the classes as ordinary addition%
\begin{align}
&  \widetilde{\mathbf{\mu}}_{odd1}^{\left(  3\right)  }\left[  \left[
\begin{array}
[c]{c}%
2k_{1}+1\\
1
\end{array}
\right]  ,\left[
\begin{array}
[c]{c}%
2k_{2}+1\\
1
\end{array}
\right]  ,\left[
\begin{array}
[c]{c}%
2k_{3}+1\\
1
\end{array}
\right]  \right] \nonumber\\
&  =\left[
\begin{array}
[c]{c}%
2\left(  k_{1}+k_{2}+k_{3}\right)  +3\\
3
\end{array}
\right]  \sim\left[
\begin{array}
[c]{c}%
2\left(  k_{1}+k_{2}+k_{3}\right)  +1\\
1
\end{array}
\right]  ,\ \ \ \ k_{i}\in\mathbb{N},
\end{align}
and%
\begin{align}
&  \widetilde{\mathbf{\mu}}_{odd1}^{\left(  3\right)  }\left[  \left[
\begin{array}
[c]{c}%
1\\
2k_{1}+1
\end{array}
\right]  ,\left[
\begin{array}
[c]{c}%
1\\
2k_{2}+1
\end{array}
\right]  ,\left[
\begin{array}
[c]{c}%
1\\
2k_{3}+1
\end{array}
\right]  \right] \nonumber\\
&  =\left[
\begin{array}
[c]{c}%
3\\
2\left(  k_{1}+k_{2}+k_{3}\right)  +3
\end{array}
\right]  \sim\left[
\begin{array}
[c]{c}%
1\\
2\left(  k_{1}+k_{2}+k_{3}\right)  +1
\end{array}
\right]  ,\ \ \ \ k_{i}\in\mathbb{N}.
\end{align}
Then, apply the general formula for the querelement (\ref{ck-sm1}) to our
ternary case%
\begin{equation}
\overline{\widetilde{\mathfrak{S}}}_{compw}=\left[
\begin{array}
[c]{c}%
\bar{a}\\
\bar{b}%
\end{array}
\right]  _{3}=\left[
\begin{array}
[c]{c}%
\mu^{\left(  3\right)  }\left[  a,b^{2}\right] \\
\mu^{\left(  3\right)  }\left[  a^{2},b\right]
\end{array}
\right]  _{3}, \label{ck-q3}%
\end{equation}
which gives using the equivalence relations (\ref{ck-kp1})%
\begin{align}
\overline{\widetilde{\mathfrak{S}}}_{up}  &  =\widetilde{\mathbf{\mu}}%
_{odd1}^{\left(  1\right)  }\left[  \widetilde{\mathfrak{S}}_{up}\right]
=\overline{\left[
\begin{array}
[c]{c}%
2k+1\\
1
\end{array}
\right]  }\sim\left[
\begin{array}
[c]{c}%
1\\
2k+1
\end{array}
\right]  =\widetilde{\mathfrak{S}}_{down},\label{ck-u3}\\
\overline{\widetilde{\mathfrak{S}}}_{down}  &  =\widetilde{\mathbf{\mu}%
}_{odd1}^{\left(  1\right)  }\left[  \widetilde{\mathfrak{S}}_{down}\right]
=\overline{\left[
\begin{array}
[c]{c}%
1\\
2k+1
\end{array}
\right]  }\sim\left[
\begin{array}
[c]{c}%
2k+1\\
1
\end{array}
\right]  =\widetilde{\mathfrak{S}}_{up},\ \ \ \ \ k\in\mathbb{N}.
\label{ck-d3}%
\end{align}

Thus, for the \textsf{componentwise} ternary group completion of the ternary
semigroup of odd numbers $\mathbb{N}_{odd}$ we obtain%
\begin{equation}
\mathsf{K}_{0}^{\left(  3,3\right)  }\left(  \mathbb{N}_{odd}\right)
_{compw}=\left\langle \left\{  \widetilde{\mathfrak{S}}_{up}\right\}
\cup\left\{  \widetilde{\mathfrak{S}}_{down}\right\}  \mid\widetilde
{\mathbf{\mu}}_{odd1}^{\left(  3\right)  },\widetilde{\mathbf{\mu}}%
_{odd1}^{\left(  1\right)  }\right\rangle .
\end{equation}

\item The noncomponentwise Post-like multiplication (\ref{ck-ncom}). In this
case, we denote the multiplication of classes corresponding to the double
product (\ref{ck-ncom}) by $\widetilde{\mathbf{\mu}}_{odd2}^{\left(  3\right)
}$, then the general formula for the querelement will be different from
(\ref{ck-q3}) and have the form%
\begin{equation}
\overline{\widetilde{\mathfrak{S}}}_{ncompw}=\left[
\begin{array}
[c]{c}%
\bar{a}\\
\bar{b}%
\end{array}
\right]  _{3}=\left[
\begin{array}
[c]{c}%
\mu^{\left(  3\right)  }\left[  a^{2},b\right] \\
\mu^{\left(  3\right)  }\left[  a,b^{2}\right]
\end{array}
\right]  _{3},
\end{equation}
which gives (using the equivalence relations (\ref{ck-kp1}))%
\begin{align}
\overline{\widetilde{\mathfrak{S}}}_{up}  &  =\widetilde{\mathbf{\mu}}%
_{odd2}^{\left(  1\right)  }\left[  \widetilde{\mathfrak{S}}_{up}\right]
=\overline{\left[
\begin{array}
[c]{c}%
2k+1\\
1
\end{array}
\right]  }\sim\left[
\begin{array}
[c]{c}%
2k+1\\
1
\end{array}
\right]  =\widetilde{\mathfrak{S}}_{up},\\
\overline{\widetilde{\mathfrak{S}}}_{down}  &  =\widetilde{\mathbf{\mu}%
}_{odd2}^{\left(  1\right)  }\left[  \widetilde{\mathfrak{S}}_{down}\right]
=\overline{\left[
\begin{array}
[c]{c}%
1\\
2k+1
\end{array}
\right]  }\sim\left[
\begin{array}
[c]{c}%
1\\
2k+1
\end{array}
\right]  =\widetilde{\mathfrak{S}}_{down},\ \ \ \ \ k\in\mathbb{N}.
\end{align}

Now, each element is a kind of polyadic reflection obeying $\overline
{\widetilde{\mathfrak{S}}}=\widetilde{\mathfrak{S}}$ (an analog of $g^{-1}=g$
in binary groups). Thus, the \textsf{noncomponentwise} ternary group
completion of the ternary semigroup of odd numbers $\mathbb{N}_{odd}$ is%
\begin{equation}
\mathsf{K}_{0}^{\left(  3,3\right)  }\left(  \mathbb{N}_{odd}\right)
_{ncompw}=\left\langle \left\{  \widetilde{\mathfrak{S}}_{up}\right\}
\cup\left\{  \widetilde{\mathfrak{S}}_{down}\right\}  \mid\widetilde
{\mathbf{\mu}}_{odd2}^{\left(  3\right)  },\widetilde{\mathbf{\mu}}%
_{odd2}^{\left(  1\right)  }\right\rangle .
\end{equation}

\end{enumerate}
\end{example}

\begin{example}
Consider the following $4$-ary semigroup of complex matrices%
\begin{align}
\mathcal{S}_{matr}^{\left(  4\right)  }  &  =\left\langle S_{matr}\mid
\mu_{matr}^{\left(  4\right)  }\right\rangle ,\ \ S_{matr}=\left\{  a\right\}
,\ \ a=\left(
\begin{array}
[c]{cc}%
u & 0\\
0 & 0
\end{array}
\right)  ,\ \ u\in\mathbb{C},\label{ck-m4a}\\
\mu_{matr}^{\left(  4\right)  }\left[  a_{1},a_{2},a_{3},a_{4}\right]   &
=a_{1}+\varepsilon a_{2}+\varepsilon^{2}a_{3}+a_{4}, \label{ck-m4b}%
\end{align}
where $\varepsilon=e^{\frac{2\pi}{3}}$, $\varepsilon^{3}=1$.

We construct the doubles $\mathbf{S=}\left(
\begin{array}
[c]{c}%
a\\
b
\end{array}
\right)  \in S_{matr}\times S_{matr}$ with the componentwise multiplication
(\ref{ck-mg1}) which does not change the arity. Then we write the
\textquotedblleft twist-like\textquotedblright\ equivalence relation
(\ref{ck-eq2}) for our case%
\begin{equation}
\left(
\begin{array}
[c]{c}%
a_{1}\\
b_{1}%
\end{array}
\right)  \sim_{twist}\left(
\begin{array}
[c]{c}%
a_{2}\\
b_{2}%
\end{array}
\right)  \Longleftrightarrow\exists_{z\in S}\ \left(  \mu^{\left(  4\right)
}\right)  ^{\circ2}\left[  a_{1}^{3},b_{2}^{3},z\right]  =\left(  \mu^{\left(
4\right)  }\right)  ^{\circ2}\left[  a_{2}^{3},b_{1}^{3},z\right]
,\ \ a_{i},b_{i},z\in S_{matr}.
\end{equation}

It follows from (\ref{ck-m4a})--(\ref{ck-m4b}), that $\mathcal{S}%
_{matr}^{\left(  4\right)  }$ is an idempotent semigroup, because
$\mu^{\left(  4\right)  }\left[  a^{4}\right]  =a$, $\forall a\in S_{matr}$,
moreover, $\mu^{\left(  4\right)  }\left[  a^{3},b\right]  =b$, $\forall
a,b\in S_{matr}$, and so $\left(  \mu^{\left(  4\right)  }\right)  ^{\circ
2}\left[  a_{1}^{3},b_{2}^{3},z\right]  =\left(  \mu^{\left(  m\right)
}\right)  ^{\circ2}\left[  a_{2}^{3},b_{1}^{3},z\right]  =z$ identically for
$\forall a_{i},b_{i},z\in S_{matr}$. Thus, we do not have different
equivalence classes at all, and therefore for (\ref{ck-m4a})%
\begin{equation}
\mathsf{K}_{0}^{\left(  4,4\right)  }\left(  \mathcal{S}_{matr}^{\left(
4\right)  }\right)  =1,
\end{equation}
in our multiplicative notation.
\end{example}

Let us consider the case, when the completion group of classes has different
arity from that of the initial semigroup, i.e $\mathsf{K}_{0}^{\left(
m,n\right)  }\left(  \mathcal{S}^{\left(  m\right)  }\right)  $ with $n\neq m$.

\begin{example}
The representatives of the residue (congruence) class $S_{res}=\left[  \left[
7\right]  \right]  _{10}=\left\{  10k+7\right\}  $, $k\in\mathbb{N}_{0}$ form
a $5$-ary semigroup with respect to multiplication (see the arity shape for
residue classes $\left[  \left[  a\right]  \right]  _{b}$ in \cite{dup2017a}).
We take%
\begin{equation}
\mathcal{S}_{res}^{\left(  5\right)  }=\left\langle \left\{  10k+7\right\}
\mid\mu_{res}^{\left(  5\right)  }\right\rangle , \label{ck-s5}%
\end{equation}
where $5$-ary multiplication $\mu_{res}^{\left(  5\right)  }$ is the ordinary
product (in $\mathbb{Z}_{+}$) $\mu_{res}^{\left(  5\right)  }\left[
a_{1},a_{2},a_{3},a_{4},a_{5}\right]  =a_{1}a_{2}a_{3}a_{4}a_{5}$,
$a_{i}=10k_{i}+7$, $k_{i}\in\mathbb{N}_{0}$.

Now the doubles (\ref{ck-sab}) are%
\begin{equation}
\mathbf{S}_{res}=\left(
\begin{array}
[c]{c}%
10k+7\\
10l+7
\end{array}
\right)  \in\left[  \left[  7\right]  \right]  _{10}\times\left[  \left[
7\right]  \right]  _{10},\ \ \ k,l\in\mathbb{N}_{0}. \label{ck-res}%
\end{equation}
The equivalence relations (\ref{ck-eq1}) and (\ref{ck-eq2}) for $m=5$ are%
\begin{align}
\left(
\begin{array}
[c]{c}%
10k_{1}+7\\
10l_{1}+7
\end{array}
\right)   &  \sim_{gauge}\left(
\begin{array}
[c]{c}%
10k_{2}+7\\
10l_{2}+7
\end{array}
\right) \nonumber\\
&  \Leftrightarrow\exists_{x,y\in S_{res}}\ \left(
\begin{array}
[c]{c}%
\left(  10k_{1}+7\right)  ^{4}x\\
\left(  10l_{1}+7\right)  ^{4}x
\end{array}
\right)  =\left(
\begin{array}
[c]{c}%
\left(  10k_{2}+7\right)  ^{4}y\\
\left(  10l_{2}+7\right)  ^{4}y
\end{array}
\right)  ,\ \ \ k_{i},l_{i}\in\mathbb{N}_{0}, \label{ck-ga}%
\end{align}
and%
\begin{align}
\left(
\begin{array}
[c]{c}%
10k_{1}+7\\
10l_{1}+7
\end{array}
\right)   &  \sim_{twist}\left(
\begin{array}
[c]{c}%
10k_{2}+7\\
10l_{2}+7
\end{array}
\right) \nonumber\\
&  \Leftrightarrow\exists_{x,y\in S_{res}}\ \left(  10k_{1}+7\right)
^{4}\left(  10l_{2}+7\right)  ^{4}z=\left(  10k_{2}+7\right)  ^{4}\left(
10l_{1}+7\right)  ^{2}z. \label{ck-kl1}%
\end{align}

It follows from the \textquotedblleft gauge\textquotedblright\ shifts
(\ref{ck-ga}) that the components of the double (\ref{ck-res}) are mutually
prime. Some of the equivalence relations are (we present only those with
$k_{1}<k_{2}$)%
\begin{equation}%
\begin{array}
[c]{cc}%
\begin{array}
[c]{c}%
\left(
\begin{array}
[c]{c}%
7\\
17
\end{array}
\right)  \sim\left(
\begin{array}
[c]{c}%
77\\
187
\end{array}
\right)  \sim\left(
\begin{array}
[c]{c}%
147\\
357
\end{array}
\right)  \ldots,\\[10pt]%
\left(
\begin{array}
[c]{c}%
7\\
77
\end{array}
\right)  \sim\left(
\begin{array}
[c]{c}%
17\\
187
\end{array}
\right)  \sim\left(
\begin{array}
[c]{c}%
27\\
297
\end{array}
\right)  \ldots,\\[10pt]%
\left(
\begin{array}
[c]{c}%
17\\
47
\end{array}
\right)  \sim\left(
\begin{array}
[c]{c}%
187\\
517
\end{array}
\right)  \sim\left(
\begin{array}
[c]{c}%
357\\
987
\end{array}
\right)  \ldots,
\end{array}
&
\begin{array}
[c]{c}%
\left(
\begin{array}
[c]{c}%
27\\
37
\end{array}
\right)  \sim\left(
\begin{array}
[c]{c}%
297\\
407
\end{array}
\right)  \sim\left(
\begin{array}
[c]{c}%
567\\
777
\end{array}
\right)  \ldots,\\[10pt]%
\left(
\begin{array}
[c]{c}%
57\\
87
\end{array}
\right)  \sim\left(
\begin{array}
[c]{c}%
247\\
377
\end{array}
\right)  \sim\left(
\begin{array}
[c]{c}%
437\\
667
\end{array}
\right)  \ldots,\\[10pt]%
\left(
\begin{array}
[c]{c}%
87\\
377
\end{array}
\right)  \sim\left(
\begin{array}
[c]{c}%
147\\
637
\end{array}
\right)  \sim\left(
\begin{array}
[c]{c}%
177\\
767
\end{array}
\right)  \ldots.
\end{array}
\end{array}
\end{equation}

We denote the equivalence class by%
\begin{equation}
\widetilde{\mathfrak{S}}=\widetilde{\mathfrak{S}}_{res}=\left[
\begin{array}
[c]{c}%
a\\
b
\end{array}
\right]  _{res}=\left[
\begin{array}
[c]{c}%
10k+7\\
10l+7
\end{array}
\right]  _{res}\in S_{res}\times S_{res}\diagup\sim,\ \ k,l\in\mathbb{N}_{0},
\label{ck-sres}%
\end{equation}
and we use the minimal representative, when needed and possible. The polyadic
group of the equivalence classes can be obtained, if we define the classes
multiplication and the querelement. If we choose the $5$-ary componentwise
multiplication (\ref{ck-mcomp}), then we obtain the group completion of the
same arity $\mathsf{K}_{0}^{\left(  5,5\right)  }\left(  \mathcal{S}%
_{res}^{\left(  5\right)  }\right)  $, as in the previous \textit{Example
}\ref{ck-ex-nodd}. The more exotic case is to use for the classes doubles
$\widetilde{\mathfrak{S}}_{res}$ the changing arity product (\ref{ck-m3}),
which gives us the ternary group of equivalence classes, which is built from
the $5$-ary semigroup. Indeed, let $\widetilde{\mathbf{\mu}}_{res}^{\left(
3\right)  }$ be the ternary multiplication of the equivalence classes defined
by (cf. (\ref{ck-m3}))%
\begin{equation}
\widetilde{\mathbf{\mu}}_{res}^{\left(  3\right)  }\left[  \widetilde
{\mathfrak{S}}_{1},\widetilde{\mathfrak{S}}_{2},\widetilde{\mathfrak{S}}%
_{3}\right]  =\left[
\begin{array}
[c]{c}%
\left(  10k_{1}+7\right)  \left(  10k_{2}+7\right)  \left(  10k_{3}+7\right)
\left(  10l_{1}+7\right)  \left(  10l_{2}+7\right) \\
10l_{3}+7
\end{array}
\right]  _{res},\ \ k_{i},l_{i}\in\mathbb{N}_{0}.
\end{equation}

The querelement $\overline{\widetilde{\mathfrak{S}}}$ can be determined from
the manifest form of the ternary multiplication $\widetilde{\mathbf{\mu}%
}_{res}^{\left(  3\right)  }$ of the equivalence classes, and it has the
general form%
\begin{equation}
\overline{\widetilde{\mathfrak{S}}}=\left[
\begin{array}
[c]{c}%
\bar{a}\\
\bar{b}%
\end{array}
\right]  _{res}\equiv\widetilde{\mathbf{\mu}}_{res}^{\left(  1\right)
}\left[  \widetilde{\mathfrak{S}}\right]  \sim\left[
\begin{array}
[c]{c}%
a^{3}b^{2}\\
a^{2}b^{3}%
\end{array}
\right]  _{res}\sim\left[
\begin{array}
[c]{c}%
\left(  10k+7\right)  ^{3}\left(  10l+7\right)  ^{2}\\
\left(  10k+7\right)  ^{2}\left(  10l+7\right)  ^{3}%
\end{array}
\right]  _{res},\ a,b\in S_{res},\ \ k_{i},l_{i}\in\mathbb{N}.
\end{equation}

Thus, the ternary group of the classes (\ref{ck-sres})%
\begin{equation}
\mathsf{K}_{0}^{\left(  5,3\right)  }\left(  \mathcal{S}_{res}^{\left(
5\right)  }\right)  =\left\langle \left\{  \widetilde{\mathfrak{S}}%
_{res}\right\}  \mid\widetilde{\mathbf{\mu}}_{res}^{\left(  3\right)
},\widetilde{\mathbf{\mu}}_{res}^{\left(  1\right)  }\right\rangle
\end{equation}
is the completion group of the $5$-ary semigroup $\mathcal{S}_{res}^{\left(
5\right)  }$ (\ref{ck-s5}).
\end{example}

\bigskip

\textbf{Acknowledgements.} The author is deeply thankful to Joachim Cuntz,
Siegfried Echterhoff and Christian Voigt for long-ago explanations of
$K$-theory which indeed now re-awakened my former interest to consider higher
arities in this promising direction, and grateful to Vladimir Akulov, Mike
Hewitt, Dimitrij Leites, Thomas Nordahl, Vladimir Tkach, Raimund Vogl and
Alexander Voronov for useful discussions, and valuable help.


\pagestyle{emptyf}

\end{document}